\numberwithin{equation}{section}
 \newtheorem{theorem}{Theorem}[section]
 \newtheorem{lemma}[theorem]{Lemma}
 \def\3bar{{|\hspace{-.02in}|\hspace{-.02in}|}}
\def\T{{\mathcal{T}}}
\def\beta{\boldsymbol{\eta}}
\def\cal#1{{\mathcal #1}}
\def\pT{{\partial T}}
\def\bw{{\mathbf{w}}}
\def\bf{{\mathbf{f}}}
\def\bg{{\mathbf{g}}}
\def\bu{{\mathbf{u}}}
\def\bv{{\mathbf{v}}}
\def\bn{{\mathbf{n}}}
\def\bvarphi{{\boldsymbol{\varphi}}}
\newtheorem{algorithm}{WG Algorithm}[section]
\numberwithin{equation}{section}
\def\3bar{{|\hspace{-.02in}|\hspace{-.02in}|}}
\def\p#1{\begin{pmatrix}#1\end{pmatrix}}
 \def\cal#1{\mathcal{#1}}
\def\ad#1{\begin{aligned}#1\end{aligned}}  \def\b#1{\mathbf{#1}} 
\def\a#1{\begin{align*}#1\end{align*}} \def\an#1{\begin{align}#1\end{align}}
\begin{document}

\title []
 {  Weak Galerkin Methods  for the Brinkman Equations}

  \author {Chunmei Wang$\dag$}\thanks{$\dag$ Corresponding author. } 
  \address{Department of Mathematics, University of Florida, Gainesville, FL 32611, USA. }
  \email{chunmei.wang@ufl.edu}
  \thanks{The research of Chunmei Wang was partially supported by National Science Foundation Grant DMS-2136380.}

\author {Shangyou Zhang}
\address{Department of Mathematical Sciences,  University of Delaware, Newark, DE 19716, USA}   \email{szhang@udel.edu}

\begin{abstract}
 This paper introduces a novel weak Galerkin (WG) finite element method for the numerical solution of the Brinkman equations. The Brinkman model, which seamlessly integrates characteristics of both the Stokes and Darcy equations, is employed to describe fluid flow in multiphysics contexts, particularly within heterogeneous porous media exhibiting spatially variable permeability. The proposed WG method offers a unified and robust approach capable of accurately capturing both Stokes- and Darcy-dominated regimes. A discrete inf-sup condition is established, and optimal-order error estimates are rigorously proven for the WG finite element solutions. Furthermore, a series of numerical experiments is performed to corroborate the theoretical analysis, demonstrating the method's accuracy and stability  in addressing the complexities inherent in the Brinkman equations.
 
\end{abstract}

\keywords{weak Galerkin, weak gradient, weak divergence,  polytopal meshes,  Brinkman equations.}

\subjclass[2010]{65N30, 65N15, 65N12, 65N20}
 
 \maketitle
 
\section{Introduction}  
This work focuses on the design and analysis of stable and efficient numerical schemes for the Brinkman equations based on the weak Galerkin (WG) finite element methodology. The Brinkman system provides a unified framework for modeling fluid flow in heterogeneous porous media, where the permeability exhibits significant spatial variability. Such variations naturally give rise to zones governed primarily by Darcy’s law, while others are better described by Stokes flow. In a simplified form, the Brinkman model seeks to compute the velocity field  $\bu$ and the pressure field $p$ by solving the following system:
\begin{equation}\label{model}
 \begin{split}
  -\mu \Delta \bu+\nabla p+\mu \kappa^{-1} \bu=&\bf, \qquad\qquad \text{in}\quad \Omega,\\
 \nabla\cdot  \bu=&0,\qquad\qquad \text{in}\quad \Omega,\\
 \bu=&\bg,\qquad\qquad \text{on}\quad \partial\Omega.
 \end{split}
 \end{equation}Here, $\mu$ denotes the dynamic viscosity of the fluid, and $\kappa$ is the permeability tensor associated with the porous medium. The computational domain $\Omega \subset \mathbb{R}^d$ is assumed to be polygonal or polyhedral, with spatial dimension $d = 2$ or $3$. The vector field $\mathbf{f}$ represents a prescribed source term corresponding to external momentum input. For the purposes of analysis, and without loss of generality, we consider the special case where $\mathbf{g} = 0$ and $\mu = 1$.
The permeability tensor $\kappa$ is assumed to be symmetric and uniformly positive definite. That is, there exist positive constants $\lambda_1$ and  $\lambda_2 > 0$ such that for any $\xi\in \mathbb R^d$,
 $$
 \lambda_1\xi^T\xi\leq \xi^T \kappa^{-1}\xi\leq \lambda_2\xi^T\xi. 
 $$
 For analytical simplicity, the permeability is taken to be constant throughout this paper. Nonetheless, the theoretical framework developed herein can be naturally extended to handle spatially varying permeability without introducing substantial complications.

The variational formulation corresponding to the Brinkman system \eqref{model} is given as follows: Determine $\bu \in [H_0^1(\Omega)]^d$ and $p \in L_0^2(\Omega)$ such that
\begin{equation}\label{weak}
 \begin{split}
     (\nabla \bu, \nabla \bv)-(\nabla \cdot\bv, p)+(\kappa^{-1} \bu, \bv)=&(\bf, \bv), \qquad \forall \bv\in [H_0^1(\Omega)]^d \\
     (\nabla \cdot\bu, q)=&0, \qquad \qquad\forall q\in L_0^2(\Omega),
 \end{split}
 \end{equation} 
  where $$H_0^1(\Omega)=\{w\in H^1(\Omega): w|_{\partial\Omega}=0\},$$   $$L_0^2(\Omega)=\{q\in L^2(\Omega); \int_{\Omega} qdx=0\}.$$

The Brinkman equations \eqref{model} model fluid flow in porous media with fractures and generalize the Stokes equations, which approximate Navier–Stokes flow at low Reynolds numbers. Accurate simulation in such multiphysics environments is vital for applications like industrial filtration, foam design, and flow in fractured reservoirs.
High-contrast permeability leads to sharp spatial variations in flow, with regions governed by either Darcy or Stokes behavior. This transition poses a numerical challenge: standard finite element methods often lose stability or accuracy outside their target regime. For example, Stokes-stable elements (e.g., $P_2–P_0$, Crouzeix–Raviart, Mini) degrade in Darcy zones, while Darcy-stable elements (e.g., Raviart–Thomas) perform poorly in Stokes-dominated areas \cite{12}.
A key difficulty is constructing discretizations that remain stable and accurate across both regimes. Recent efforts address this by modifying classical elements to ensure uniform stability for the full Brinkman model \cite{b1,11,12}.

The  WG  finite element method provides a flexible framework for approximating partial differential equations by interpreting differential operators in a weak sense, inspired by distribution theory. Unlike classical methods, WG allows for discontinuous, piecewise polynomial functions and reduces regularity requirements through weak derivatives and stabilization techniques.
WG methods have seen extensive development and successful application to a wide range of PDEs over the past decade \cite{wg1, wg2, wg3, wg4, wg5, wg6, wg7, wg8, wg9, wg10, wg11, wg12, wg13, wg14, fedi, wg15, wg16, wg17, wg18, wg19, wg20, wg21, itera, wy3655}, thanks to their use of weak continuity and natural alignment with variational structures. This flexibility enables robust performance across diverse problems, including those with complex geometries and mixed regimes.
This paper introduces a WG formulation designed to accommodate general polygonal and polyhedral meshes. A thorough theoretical analysis is carried out to  establish optimal-order error estimates, demonstrating the method’s precision and reliability.

The structure of the paper is as follows. Section 2 offers a brief overview of the weak gradient and weak divergence operators, along with their corresponding discrete versions. In Section 3, we develop a  WG  method tailored for the Brinkman equations, accommodating general polytopal meshes. Section 4 is dedicated to demonstrating the well-posedness of the proposed numerical scheme by proving the existence and uniqueness of the solution. As a key analytical component, an inf-sup condition is also established. In Section 5, we derive the error equations related to the WG formulation. Section 6 presents an optimal error bound for the numerical solution, while Section 7 extends the analysis to derive convergence rates in the $L^2$-norm. Section 8 concludes with numerical experiments that validate the theoretical predictions and illustrate the method's accuracy and robustness.

Standard mathematical notation is employed throughout this work. Let $D\subset \mathbb{R}^d$  be an open and bounded domain with a Lipschitz-continuous boundary. For any integer $s\geq 0$, we denote the inner product, seminorm, and norm in the Sobolev space $H^s(D)$ by  $(\cdot,\cdot)_{s,D}$, $|\cdot|_{s,D}$ and $\|\cdot\|_{s,D}$ respectively. When $D=\Omega$, we omit the subscript for simplicity. Additionally, when $s=0$, these notations reduce to  $(\cdot,\cdot)_D$, $|\cdot|_D$ and $\|\cdot\|_D$, respectively.

\section{Discrete Weak Differential Operators} 
This section revisits the definitions of the weak gradient and weak divergence operators, along with their corresponding discrete forms, as developed in \cite{autostokes}.

Let $T$ be a polytopal element with boundary  $\partial T$. A weak function on $T$  is defined as a pair  $\bv=\{\bv_0, \bv_b\}$, where $\bv_0\in [L^2(T)]^d$ represents the interior component, and $\bv_b\in [L^{2}(\partial T)]^d$ denotes the boundary component. Notably, the boundary part 
  $\bv_b$ is not required to match the trace of  $\bv_0$ on $\partial T$.  
 
The space of all weak functions on   $T$, denote by  $W(T)$, is given by
 \begin{equation*}\label{2.1}
 W(T)=\{\bv=\{\bv_0, \bv_b\}: \bv_0\in [L^2(T)]^d, \bv_b\in [L^{2}(\partial
 T)]^d\}.
\end{equation*}
 
 The weak gradient $\nabla_w\bv$ is a linear operator that maps
 $W(T)$ into the dual space of $[H^1(T)]^{d\times d}$. For any 
 $\bv\in W(T)$, it is defined via:
 \begin{equation*} 
  (\nabla_w\bv, \bvarphi)_T=-(\bv_0, \nabla \cdot \bvarphi)_T+
  \langle \bv_b,   \bvarphi \cdot \bn \rangle_{\partial T},\quad \forall \bvarphi\in [H^1(T)]^{d\times d},
  \end{equation*}
 where $\bn$  denotes the outward unit normal vector to $\partial T$, with components $n_i$ for $i=1,\cdots, d$.

Analogously, the weak divergence $\nabla_w\cdot \bv$ is a linear
 operator mapping 
 $W(T)$ into the dual of $H^1(T)$, defined as
 \begin{equation*} 
  (\nabla_w\cdot \bv, w)_T=-(\bv_0, \nabla  w)_T+
  \langle \bv_b\cdot\bn,  w \rangle_{\partial T},\quad \forall w\in H^1(T).
  \end{equation*} 
 
 For any non-negative integer $r\ge 0$, let $P_r(T)$ denote the space of polynomials of total degree no more than 
 $r$.   The discrete weak gradient 
 $\nabla_{w, r, T}\bv$ is the unique polynomial in  $[P_r(T)]^{d\times d}$ that satisfies:
 \begin{equation}\label{2.4}
(\nabla_{w, r, T}\bv, \bvarphi)_T=-(\bv_0, \nabla \cdot \bvarphi)_T+
  \langle \bv_b,   \bvarphi \cdot \bn \rangle_{\partial T},\quad \forall \bvarphi \in [P_r(T)]^{d\times d}.
  \end{equation}
If  $\bv_0\in
 [H^1(T)]^d$ is sufficiently smooth, an integration by parts transforms the above into an equivalent form:
 \begin{equation}\label{2.4new}
(\nabla_{w, r, T}\bv, \bvarphi)_T= (\nabla \bv_0,  \bvarphi)_T+
  \langle \bv_b-\bv_0,   \bvarphi \cdot \bn \rangle_{\partial T}, \quad \forall \bvarphi \in [P_r(T)]^{d\times d}.
  \end{equation}  

Similarly, the discrete weak divergence $\nabla_{w, r, T}\cdot\bv$ is defined as the unique polynomial in $P_r(T)$ such that
 \begin{equation}\label{div}
(\nabla_{w, r, T}\cdot \bv, w)_T=-(\bv_0, \nabla  w)_T+
  \langle \bv_b\cdot\bn,  w \rangle_{\partial T},\quad \forall w\in P_r(T).
  \end{equation}
Again, for smooth $\bv_0\in
 [H^1(T)]^d$, this is equivalent to:
 \begin{equation}\label{divnew}
(\nabla_{w, r, T}\cdot \bv, w)_T= (\nabla   \cdot\bv_0,  w)_T+
  \langle (\bv_b-\bv_0)\cdot\bn,  w \rangle_{\partial T},\quad \forall w\in P_r(T).
  \end{equation}  

\section{Weak Galerkin Finite Element Methods}\label{Section:WGFEM}
 Let ${\cal T}_h$ be a shape-regular partition of the domain 
 $\Omega\subset \mathbb R^d$ into general polytopal elements, as defined in \cite{wy3655}. Denote by  ${\mathcal E}_h$ the collection of all edges (in 2D) or faces (in 3D) of 
 ${\cal T}_h$, and let  ${\mathcal E}_h^0={\mathcal E}_h \setminus
 \partial\Omega$ denote the set of all interior edges or faces. For each element $T\in {\cal T}_h$, let $h_T$ be the diameter of $T$, and define the mesh size by
 $$h=\max_{T\in {\cal
 T}_h}h_T.$$

Fix an integer  $k\geq 1$. On each element  $T\in\T_h$, we define the local weak finite element space by
 \begin{equation*}
 V(k, T)=\{\{\bv_0,\bv_b\}: \bv_0\in [P_k(T)]^d, \bv_b\in [P_{k}(e)]^d, e\subset \partial T\}.    
 \end{equation*}
The global weak finite element space $V_h$ is obtained by assembling the local spaces $V(k, T)$ over all elements  $T\in {\cal T}_h$, with the requirement that the boundary part $\bv_b$ is single-valued across shared edges or faces:
\an{\label{V-h}
 V_h=\big\{\{\bv_0,\bv_b\}:\ \{\bv_0,\bv_b\}|_T\in V(k, T),
 \forall T\in {\cal T}_h \big\}. }
The subspace of $V_h$ consisting of functions with vanishing boundary values on $\partial\Omega$ is denoted by
$$
V_h^0=\{\bv\in V_h: \bv_b|_{\partial\Omega}=0\}.
$$
The finite element space for the pressure variable is defined as
\an{\label{W-h} W_h=\{q\in L_0^2(\Omega): q|_T \in P_{k-1}(T)\}. }

For simplicity in notation, we define the discrete weak gradient  $\nabla_{w} \bv$  and discrete weak divergence $\nabla_{w} \cdot\bv$ elementwise as follows, in accordance with equations \eqref{2.4} and \eqref{div}: 
\an{\label{wg-k}
(\nabla_{w} \bv)|_T &= \nabla_{w, k-1, T}(\bv |_T),   \\
\label{wd-k}
(\nabla_{w}\cdot \bv)|_T &= \nabla_{w, k-1, T}\cdot (\bv |_T). }
 
On each element $T\in\T_h$, let $Q_0$ denote the $L^2$ projection onto $P_k(T)$, and for each edge or face $e\subset\partial T$, let $Q_b$ denote the $L^2$ projection onto $P_{k}(e)$. Then, for any $\bv\in [H^1(\Omega)]^d$,  the $L^2$ projection  into  $V_h$ is defined locally by
 $$
  (Q_h \bv)|_T:=\{Q_0(\bv|_T),Q_b(\bv|_{\pT})\},\qquad \forall T\in\T_h.
$$

We now formulate a Weak Galerkin method for approximating solutions to the Brinkman equations \eqref{model}.
\begin{algorithm}\label{PDWG1}
Find $\bu_h=\{\bu_0, \bu_b\} \in V_h^0$ and $p_h\in W_h$  such that  
\begin{equation}\label{WG}
\begin{split} a_h(\bu_h,  \bv_h) -(\nabla_w \cdot\bv_h, p_h)=&(\bf, \bv_0) \qquad  \forall \bv_h\in V_h^0,\\
     (\nabla_w \cdot\bu_h, q_h)=&0 \qquad\qquad \forall q_h\in W_h, 
\end{split}
\end{equation}
where  
\a{ a_h(\bu_h,  \bv_h) & =
  (\nabla_w \bu_h, \nabla_w \bv_h) +(\kappa^{-1} \bu_0,\bv_0)+s(\bu_h,\bv_h), \\
  s(\bu_h, \bv_h) &=\sum_{T\in {\cal T}_h} h_T^{-1} \langle \bu_0-\bu_b, \bv_0-\bv_b \rangle_{\partial T}, }
and 
$$
(\cdot, \cdot)=\sum_{T\in {\cal T}_h}(\cdot, \cdot)_T.
$$
\end{algorithm}

\section{Existence and Uniqueness of the Solution} 
Let ${\cal T}_h$  be a shape-regular finite element mesh partitioning the domain $\Omega$. For any element $T\in {\cal T}_h$ and any function $\phi\in H^1(T)$, the following trace inequality holds  (see \cite{wy3655}):
\begin{equation}\label{tracein}
 \|\phi\|^2_{\partial T} \leq C(h_T^{-1}\|\phi\|_T^2+h_T \|\nabla \phi\|_T^2).
\end{equation}
In particular, if $\phi$ is a polynomial function on $T$, a simplified trace inequality applies (see \cite{wy3655}):
\begin{equation}\label{trace}
\|\phi\|^2_{\partial T} \leq Ch_T^{-1}\|\phi\|_T^2.
\end{equation}

\begin{lemma}\cite{wg21, autostokes}\label{lem}
Let ${\cal T}_h$ be a shape-regular finite element partition of  $\Omega$. Then for any integers $0\leq s \leq 1$, $0\leq m \leq k$,  and $0\leq  n \leq   k-1$, the following approximation estimates hold:
\begin{eqnarray}
\label{error1}
 \sum_{T\in {\cal T}_h} h_T^{2s}\|({\cal Q}_h-I)p\|^2_{s,T}&\leq& C  h^{2n+2}\|p\|^2_{n+1},\\
\label{error2}
\sum_{T\in {\cal T}_h}h_T^{2s}\|\bu- Q _0\bu\|^2_{s,T}&\leq& C h^{2(m+1)}\|\bu\|^2_{m+1}, 
 \\ \label{error3}
\sum_{T\in {\cal T}_h}h_T^{2s}\|\nabla\bu-{\cal Q}_h(\nabla\bu)\|^2_{s,T}&\leq& C h^{2n+2}\|\bu\|^2_{n+2}.
\end{eqnarray}
 \end{lemma}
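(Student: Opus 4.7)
The plan is to prove these three estimates by the classical combination of the Bramble--Hilbert lemma applied on a reference element together with an affine (or quasi-affine, for polytopal $T$) scaling argument, exactly as in the references cited. Since $Q_0$ is the $L^2$ projection onto $[P_k(T)]^d$ and $\mathcal{Q}_h$ is the elementwise $L^2$ projection onto the appropriate polynomial space (scalar $P_{n}(T)$ in \eqref{error1} and matrix-valued $[P_n(T)]^{d\times d}$ in \eqref{error3}), each of these operators preserves polynomials of degree up to $n$ or $m$ respectively, which is precisely the hypothesis needed to invoke Bramble--Hilbert.

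For \eqref{error2}, I would fix $T\in\T_h$ and argue that because $Q_0$ preserves $[P_k(T)]^d$ and $m\le k$, the standard local estimate
\begin{equation*}
 \|\bu - Q_0\bu\|_{s,T} \le C\, h_T^{\,m+1-s}\,\|\bu\|_{m+1,T},\qquad 0\le s\le 1,
\end{equation*}
holds; multiplying by $h_T^{2s}$ and summing over $T\in\T_h$ gives \eqref{error2}. The estimates \eqref{error1} and \eqref{error3} follow by the same pattern applied to $\mathcal{Q}_h$ acting on $p$ and on $\nabla\bu$, with the degree parameter $n\le k-1$ replacing $m$ and with $\|p\|_{n+1}$ and $\|\bu\|_{n+2}$ respectively playing the role of the right-hand side (note the extra derivative in \eqref{error3}, which comes from the fact that $\mathcal{Q}_h$ is being applied to $\nabla\bu$, so the relevant seminorm of $\nabla\bu$ of order $n+1$ is $|\bu|_{n+2}$). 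For the case $s=1$ one can either invoke the general approximation theorem for $L^2$ projections directly, or split $\bu - Q_0\bu = (\bu - \Pi\bu) + (\Pi\bu - Q_0\bu)$ with $\Pi$ a standard polynomial interpolant, use Bramble--Hilbert on the first term and the inverse inequality on the polynomial difference in the second.

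The only mildly delicate point, and what I would treat as the main obstacle, is that $\T_h$ is a \emph{polytopal} partition rather than a simplicial or quadrilateral one, so a direct reference-element pullback is not available. This is circumvented by the shape-regularity assumptions of \cite{wy3655} (quoted in the paper just before the lemma via the trace inequalities \eqref{tracein}--\eqref{trace}); these assumptions guarantee a uniform local Bramble--Hilbert estimate on each polytope $T$ by covering $T$ with shape-regular simplicial subdivisions or by invoking the polytopal Bramble--Hilbert framework directly. With this ingredient in hand, the three bounds follow routinely by summing the local estimates over all $T\in\T_h$.
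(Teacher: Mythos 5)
Your proposal is correct: the paper itself gives no proof of this lemma, quoting it directly from \cite{wg21, autostokes}, and those references (ultimately resting on the shape-regularity framework of \cite{wy3655}) establish the estimates by exactly the route you describe --- Bramble--Hilbert plus scaling, with the polytopal difficulty handled via the shape-regularity assumptions, and the $s=1$ case for the $L^2$ projections obtained through an interpolant plus inverse-inequality splitting. Your identification of the polynomial-preservation degrees ($m\le k$ for $Q_0$, $n\le k-1$ for ${\cal Q}_h$, and the shift $\|\bu\|_{n+2}$ in \eqref{error3} from applying ${\cal Q}_h$ to $\nabla\bu$) matches the cited arguments, so there is nothing substantive to add.
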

 
 We define the following norm on the space $V_h$ for any function $\bv = {\bv_0, \bv_b} \in V_h$:
\begin{equation}\label{3norm}
\3bar \bv\3bar= \Big( \sum_{T\in {\cal T}_h}(\nabla_{w} \bv, \nabla_{ w} \bv)_T+(\kappa^{-1} \bv_0,\bv_0)_T +h_T^{-1}\|\bv_0-\bv_b\|_{\partial T}^2\Big) ^{\frac{1}{2}}.
\end{equation}

\begin{lemma}\label{normdefine}
The quantity defined in \eqref{3norm} is a norm on the weak finite element space $V_h$.
\end{lemma}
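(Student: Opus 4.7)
The plan is to check the four norm axioms for $\3bar \cdot \3bar$ on $V_h$. Non-negativity, positive homogeneity, and the triangle inequality are immediate: the expression is the square root of a sum of squared $L^2$-type seminorms, so these three properties follow by applying Cauchy--Schwarz and Minkowski termwise on each $T \in \T_h$. The only substantive axiom is positive definiteness, which is where I would concentrate the argument.

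To establish positive definiteness, I would reason as follows. Suppose $\bv = \{\bv_0, \bv_b\} \in V_h$ satisfies $\3bar \bv \3bar = 0$. Since each of the three contributions on the right-hand side of \eqref{3norm} is non-negative, each must vanish individually on every element $T \in \T_h$. The uniform positive definiteness of $\kappa^{-1}$, which provides the lower bound $\lambda_1 \|\bv_0\|_T^2 \leq (\kappa^{-1}\bv_0,\bv_0)_T$, forces $\bv_0|_T = 0$ on every $T$. Substituting $\bv_0 = 0$ into the vanishing boundary contribution $h_T^{-1}\|\bv_0 - \bv_b\|_{\partial T}^2 = 0$ then yields $\bv_b|_{\partial T} = 0$ on every $T$. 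Because ${\mathcal E}_h$ is covered by element boundaries, we conclude $\bv_b \equiv 0$, and hence $\bv = 0$ in $V_h$.

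I do not anticipate any genuine obstacle. The point worth emphasizing is that the argument leverages the Brinkman-specific zero-order term $(\kappa^{-1}\bv_0,\bv_0)_T$: this is what enables the statement to hold on the full space $V_h$ without any imposed boundary constraint. In a pure Stokes setting one would instead have to restrict to $V_h^0$ and combine identity \eqref{2.4new} (to pass from $\nabla_w\bv = 0$ and $\bv_0 - \bv_b = 0$ to $\nabla \bv_0 = 0$ on each element) with the single-valuedness of $\bv_b$ across interelement edges/faces to deduce that $\bv_0$ is globally constant, and then use the homogeneous boundary condition to eliminate that constant. Here, the Brinkman term bypasses that chain of reasoning entirely.
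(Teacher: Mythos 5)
Your proof is correct and follows essentially the same route as the paper's: both arguments use the uniform positive definiteness of $\kappa^{-1}$ to force $\bv_0=0$ on each element and then the vanishing stabilizer term $\bv_0=\bv_b$ on $\partial T$ to conclude $\bv_b=0$. Your closing remark is also apt — the zero-order Brinkman term indeed makes the result hold on all of $V_h$, which quietly resolves a small inconsistency in the paper, whose proof begins with $\bv\in V_h^0$ even though the lemma is stated for $V_h$.
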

 \begin{proof}
 We verify the definiteness of the norm. Suppose   $\3bar \cdot\3bar=0$ for some $\bv\in V_h^0$. Then
 $$
  \sum_{T\in {\cal T}_h}(\nabla_{w} \bv, \nabla_{ w} \bv)_T+(\kappa^{-1} \bv_0,\bv_0)_T +h_T^{-1}\|\bv_0-\bv_b\|_{\partial T}^2=0,
$$
which implies $ \nabla_{ w} \bv=0$, $\bv_0=0$ in each $T$, $\bv_0=\bv_b$ on $\partial T$ . It follows that $\bv_0\equiv 0$ in   $\Omega$ and hence $\bv_b\equiv 0$, so $\bv_h\equiv 0$. This completes the proof.
 \end{proof}

Let ${\cal Q}_h$ denote the $L^2$ projection onto the space of piecewise polynomials of degree at most $k-1$ on each element. 

\begin{lemma}\label{Lemma5.1} \cite{autostokes}  For any $\bu \in [H^1(T)]^d$, the following identities hold:
\begin{equation}\label{pro4}
\nabla_w  (Q_h\bu)={\cal Q}_h(\nabla  \bu),
 \end{equation}
 \begin{equation}\label{pro3}
\nabla_w\cdot Q_h\bu={\cal Q}_h(\nabla \cdot \bu).
\end{equation}
\end{lemma}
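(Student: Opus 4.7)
The plan is to verify each identity by testing against an arbitrary polynomial in the appropriate space and exploiting the definitions of the discrete weak operators \eqref{2.4}, \eqref{div} together with the $L^2$-orthogonality of the projections $Q_0$, $Q_b$, and $\mathcal{Q}_h$.

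For the weak gradient identity \eqref{pro4}, I would start from the defining relation \eqref{2.4} applied to $Q_h\bu=\{Q_0\bu, Q_b\bu\}$: for any test tensor $\bvarphi\in [P_{k-1}(T)]^{d\times d}$,
\begin{equation*}
(\nabla_w(Q_h\bu),\bvarphi)_T = -(Q_0\bu,\nabla\cdot\bvarphi)_T + \langle Q_b\bu,\bvarphi\cdot\bn\rangle_{\partial T}.
\end{equation*}
The key degree check is that $\nabla\cdot\bvarphi\in [P_{k-2}(T)]^d\subset [P_k(T)]^d$ and $\bvarphi\cdot\bn|_e\in [P_{k-1}(e)]^d\subset [P_k(e)]^d$, so the $L^2$-projection properties of $Q_0$ (onto $[P_k(T)]^d$) and $Q_b$ (onto $[P_k(e)]^d$) let me drop the projections and replace $Q_0\bu$ by $\bu$ in the volume term and $Q_b\bu$ by $\bu$ in the boundary term. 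An ordinary integration by parts then rewrites the right-hand side as $(\nabla\bu,\bvarphi)_T$, which, by definition of $\mathcal{Q}_h$ as the $L^2$-projection onto $[P_{k-1}(T)]^{d\times d}$, equals $(\mathcal{Q}_h(\nabla\bu),\bvarphi)_T$. Since $\bvarphi$ was arbitrary and $\nabla_w(Q_h\bu)\in [P_{k-1}(T)]^{d\times d}$, the uniqueness built into \eqref{2.4} yields $\nabla_w(Q_h\bu)=\mathcal{Q}_h(\nabla\bu)$.

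The argument for \eqref{pro3} follows the same template, using \eqref{div} with a scalar test $w\in P_{k-1}(T)$. Here $\nabla w\in [P_{k-2}(T)]^d$ and $w|_e\in P_{k-1}(e)$, both of which are again contained in the ranges of $Q_0$ and $Q_b$ respectively, so the projections may be removed without changing the inner products. Integration by parts produces $(\nabla\cdot\bu,w)_T$, which equals $(\mathcal{Q}_h(\nabla\cdot\bu),w)_T$ by the $L^2$-projection property of $\mathcal{Q}_h$ onto $P_{k-1}(T)$, and uniqueness of the discrete weak divergence closes the argument.

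The proof is essentially a bookkeeping exercise, and the only point that requires genuine attention, and which I would flag as the main (mild) obstacle, is to verify at the outset that the polynomial degrees of $\nabla\cdot\bvarphi$, $\bvarphi\cdot\bn$, $\nabla w$, and $w|_e$ all fall within the ranges of the interior and boundary projections $Q_0$ and $Q_b$; otherwise the commuting-diagram property fails. With $\bv_0\in [P_k(T)]^d$ and $\bv_b\in [P_k(e)]^d$ in the definition of $V(k,T)$, and the discrete operators taking values in polynomials of degree $k-1$, the inclusions go through cleanly, and no further technicality is needed.
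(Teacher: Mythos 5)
Your proof is correct and is essentially the standard argument for this lemma: the paper itself states it without proof, deferring to \cite{autostokes}, where the same commuting-property argument (test against $\bvarphi\in [P_{k-1}(T)]^{d\times d}$ or $w\in P_{k-1}(T)$, drop the projections $Q_0$, $Q_b$ by the degree inclusions, integrate by parts, and invoke uniqueness of the discrete weak operator) is used. Your degree bookkeeping is the right point to check, and it goes through as you say --- the only implicit ingredient worth naming is that the faces of a polytopal element are flat, so $\bn$ is constant on each $e$ and $\bvarphi\cdot\bn|_e$, $w\bn|_e$ indeed remain polynomials of degree at most $k-1$ on $e$.
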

 
 We now establish the inf-sup condition for the bilinear form $b(\cdot, \cdot)$.
 \begin{lemma}  
   (Inf-Sup Condition)  There exists a constant $C>0$,   independent of the mesh size $h$, such that for all $\zeta\in W_h$, 
     \begin{equation}\label{infsup}
         \sup_{\bv\in V_h^0} \frac{(\nabla_w \cdot \bv, \zeta)}{\3bar \bv\3bar}\geq C\|\zeta\|.
     \end{equation}
 \end{lemma}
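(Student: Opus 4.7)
The plan is to reduce the discrete inf-sup condition to the continuous inf-sup condition for the Stokes problem, using the commuting diagram property (Lemma \ref{Lemma5.1}) of the $L^2$ projection $Q_h$ onto $V_h$.

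First, for a given $\zeta \in W_h \subset L_0^2(\Omega)$, invoke the classical Stokes inf-sup condition on $[H_0^1(\Omega)]^d \times L_0^2(\Omega)$ to produce a function $\tilde{\bv} \in [H_0^1(\Omega)]^d$ satisfying $\nabla \cdot \tilde{\bv} = \zeta$ in $\Omega$, together with the stability bound $\|\tilde{\bv}\|_1 \leq C \|\zeta\|$. Then take $\bv_h = Q_h \tilde{\bv} \in V_h^0$ as the test function. Applying the commuting identity \eqref{pro3}, and using that $\zeta \in W_h$ is piecewise a polynomial of degree at most $k-1$ so that $(\mathcal{Q}_h \phi, \zeta) = (\phi, \zeta)$ for all $\phi \in L^2(\Omega)$, we obtain
\begin{equation*}
(\nabla_w \cdot \bv_h, \zeta) = (\nabla_w \cdot Q_h \tilde{\bv}, \zeta) = (\mathcal{Q}_h(\nabla \cdot \tilde{\bv}), \zeta) = (\nabla \cdot \tilde{\bv}, \zeta) = \|\zeta\|^2.
\end{equation*}

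Next, I would bound $\3bar \bv_h \3bar$ by $C\|\tilde{\bv}\|_1$ term by term. For the weak gradient piece, the commuting identity \eqref{pro4} and $L^2$-stability of $\mathcal{Q}_h$ give $\|\nabla_w Q_h \tilde{\bv}\|_T = \|\mathcal{Q}_h(\nabla \tilde{\bv})\|_T \leq \|\nabla \tilde{\bv}\|_T$. For the zeroth-order piece, the uniform bound on $\kappa^{-1}$ combined with the $L^2$-stability of $Q_0$ yields $(\kappa^{-1} Q_0 \tilde{\bv}, Q_0 \tilde{\bv})_T \leq \lambda_2 \|\tilde{\bv}\|_T^2$. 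For the stabilizer, insert $\tilde{\bv}$ and use the triangle inequality to get
\begin{equation*}
h_T^{-1}\|Q_0 \tilde{\bv} - Q_b \tilde{\bv}\|_{\partial T}^2 \leq C h_T^{-1}\left( \|Q_0 \tilde{\bv} - \tilde{\bv}\|_{\partial T}^2 + \|Q_b \tilde{\bv} - \tilde{\bv}\|_{\partial T}^2 \right),
\end{equation*}
and then control each boundary term using the trace inequality \eqref{tracein} together with the approximation estimate \eqref{error2} (with $m = 0$, $s = 0, 1$). Summing over $T \in \mathcal{T}_h$ produces $\3bar Q_h \tilde{\bv} \3bar \leq C \|\tilde{\bv}\|_1 \leq C \|\zeta\|$.

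Combining these ingredients,
\begin{equation*}
\sup_{\bv \in V_h^0} \frac{(\nabla_w \cdot \bv, \zeta)}{\3bar \bv \3bar} \geq \frac{(\nabla_w \cdot Q_h \tilde{\bv}, \zeta)}{\3bar Q_h \tilde{\bv} \3bar} \geq \frac{\|\zeta\|^2}{C\|\zeta\|} = C^{-1}\|\zeta\|,
\end{equation*}
which is the desired inf-sup condition. The main technical obstacle is the bound $\3bar Q_h \tilde{\bv}\3bar \leq C \|\tilde{\bv}\|_1$, specifically the stabilization term, since it mixes the interior projection $Q_0$ and boundary projection $Q_b$ on polytopal elements; however, this is handled in a routine way by the trace inequality and the approximation estimates in Lemma \ref{lem}.
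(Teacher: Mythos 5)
Your proposal is correct and takes essentially the same route as the paper: both arguments pick a continuous inf-sup (divergence-surjectivity) witness $\tilde{\bv}\in [H_0^1(\Omega)]^d$ for $\zeta$, test with $\bv=Q_h\tilde{\bv}\in V_h^0$, use the commuting identities \eqref{pro4} and \eqref{pro3} to pass between discrete and continuous quantities, and bound $\3bar Q_h\tilde{\bv}\3bar\leq C\|\tilde{\bv}\|_1$ via the trace inequality \eqref{tracein} and the approximation estimate \eqref{error2} with $m=0$, $s=0,1$. The only cosmetic differences are that you invoke the exact-divergence form $\nabla\cdot\tilde{\bv}=\zeta$ while the paper uses the ratio form \eqref{eqq}, and that you treat the stabilizer with a triangle inequality (where the $Q_b$ term still needs the best-approximation property $\|Q_b\tilde{\bv}-\tilde{\bv}\|_{\partial T}\leq\|Q_0\tilde{\bv}-\tilde{\bv}\|_{\partial T}$) whereas the paper uses that one-step projection bound directly.
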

\begin{proof}
For any  $\zeta\in W_h\subset L_0^2(\Omega)$, it is known (see \cite{2, 4, 5, 6, 7}) that there exists  $\bar{\bv}\in [H_0^1(\Omega)]^d$ satisfying:
\begin{equation} \label{eqq}
\frac{(\nabla\cdot\bar{\bv}, \zeta)}{\|\bar{\bv}\|_1}\geq C\|\zeta\|.
\end{equation}
Define $\bv=Q_h\bar{\bv}\in V_h^0$.  We show that
\begin{equation}\label{err1}
    \3bar \bv\3bar \leq C\|\bar{\bv}\|_1.
\end{equation}
Using identity \eqref{pro4}, we estimate:
$$
\sum_{T\in {\cal T}_h}\|\nabla_w \bv\|_T^2=\sum_{T\in {\cal T}_h}\|\nabla_w Q_h\bar{\bv}\|_T^2=\sum_{T\in {\cal T}_h}\|\cal{Q}_h\nabla  \bar{\bv}\|_T^2\leq \sum_{T\in {\cal T}_h}\| \nabla  \bar{\bv}\|_T^2.
$$
Also,  
\begin{equation*}
    \begin{split}
    \sum_{T\in {\cal T}_h} (\kappa^ {-1}\bv_0, \bv_0)_T =\sum_{T\in {\cal T}_h}(\kappa ^ {-1}Q_0\bar{\bv}, Q_0\bar{\bv})_T \leq     \sum_{T\in {\cal T}_h} \|\bar{\bv}\|_T^2.  
    \end{split}
\end{equation*}
Applying trace inequality \eqref{tracein} and estimate \eqref{error2} with $m=0$ and $s=0, 1$, we obtain:
 \begin{equation*}
    \begin{split}
     \sum_{T\in {\cal T}_h} h_T^{-1}\|\bv_0-\bv_b\|^2_{\partial T}   =& \sum_{T\in {\cal T}_h} h_T^{-1}\|Q_0\bar{\bv}-Q_b\bar{\bv}\|^2_{\partial T}\\
     \leq& \sum_{T\in {\cal T}_h} h_T^{-1}\|Q_0\bar{\bv}- \bar{\bv}\|^2_{\partial T}\\
     \leq & \sum_{T\in {\cal T}_h} h_T^{-2}\|Q_0\bar{\bv}- \bar{\bv}\|^2_{T}+\|Q_0\bar{\bv}- \bar{\bv}\|^2_{1,T}\\
     \leq & C\|\bar{\bv} \|_1^2.
     \end{split}
\end{equation*}   
Combining these gives \eqref{err1}.

Using \eqref{pro3}, we find:
$$
(\nabla_w \cdot  \bv, \zeta)_T=(\nabla_w \cdot Q_h\bar{\bv}, \zeta)_T=( \cal{Q}_h \nabla \cdot\bar{\bv}, \zeta)_T=(  \nabla\cdot\bar{\bv}, \zeta)_T.
$$
Combining the estimate above with \eqref{eqq} and \eqref{err1}, we obtain
$$
\frac{(\nabla_w \cdot \bv, \zeta)_T}{\3bar \bv\3bar} \geq C \frac{(\nabla  \cdot \bar{\bv}, \zeta)_T}{ \| \bar{\bv}\|_1 } \geq C\|\zeta\|.
$$

This completes the proof of the Lemma.

\end{proof}
 
\begin{theorem}The weak Galerkin Algorithm \ref{PDWG1} for the Brinkman equations \eqref{model} admits a unique solution.
\end{theorem}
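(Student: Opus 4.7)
The statement is the well-posedness of a square linear system in finite dimensions, so the plan is to reduce it to uniqueness: show that the only solution with $\bf=0$ is $(\bu_h,p_h)=(0,0)$. This is the standard strategy for discrete saddle-point problems, and the two ingredients we need are already in hand, namely the norm-coercivity of $a_h$ (from the definition \eqref{3norm}) and the inf-sup condition \eqref{infsup}.

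First I would set $\bf=0$ in \eqref{WG}, test the first equation with $\bv_h=\bu_h$ and the second with $q_h=p_h$, and add them. The mixed terms $(\nabla_w\cdot\bu_h,p_h)$ cancel, leaving $a_h(\bu_h,\bu_h)=0$. By the definition of $a_h$ and of the triple-bar norm in \eqref{3norm}, this gives exactly $\3bar\bu_h\3bar^2=0$, so $\3bar\bu_h\3bar=0$. Lemma \ref{normdefine} (definiteness of $\3bar\cdot\3bar$ on $V_h^0$) then forces $\bu_h\equiv 0$.

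Substituting $\bu_h=0$ back into the first equation of \eqref{WG} (still with $\bf=0$) yields $(\nabla_w\cdot\bv_h,p_h)=0$ for every $\bv_h\in V_h^0$. Dividing by $\3bar\bv_h\3bar$ and taking the supremum over $\bv_h\in V_h^0$, the inf-sup estimate \eqref{infsup} applied with $\zeta=p_h\in W_h$ gives $C\|p_h\|\le 0$, hence $p_h\equiv 0$. This establishes uniqueness of the homogeneous problem, and therefore, since \eqref{WG} is a square linear system in finite dimensions, it admits a unique solution for every right-hand side.

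I do not foresee a genuine obstacle in this argument; the only point requiring a bit of care is verifying that $a_h(\bv,\bv)$ actually equals $\3bar\bv\3bar^2$, which is immediate once one notes that the $(\kappa^{-1}\bv_0,\bv_0)$ contribution is nonnegative (thanks to the lower bound $\lambda_1\xi^T\xi\le \xi^T\kappa^{-1}\xi$) and that the stabilizer $s(\bv,\bv)$ reproduces the jump term in \eqref{3norm}. Everything else is a direct consequence of results already proved earlier in the excerpt.
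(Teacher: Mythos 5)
Your proposal is correct and follows essentially the same route as the paper: test the homogeneous system with $\bv_h=\bu_h$, $q_h=p_h$ so the mixed terms cancel, use $a_h(\bu_h,\bu_h)=\3bar\bu_h\3bar^2$ together with Lemma \ref{normdefine} to get $\bu_h\equiv 0$, then invoke the inf-sup condition \eqref{infsup} to conclude $p_h\equiv 0$ (the paper phrases this as the difference of two putative solutions, which is the same argument). If anything, you are slightly more complete than the paper, since you state explicitly that existence follows from uniqueness for a square finite-dimensional linear system, a step the paper leaves implicit.
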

\begin{proof}
Assume that there exist two distinct solutions  $(\bu_h^{(1)}, p_h^{(1)})$ and $(\bu_h^{(2)}, p_h^{(2)})$ in $V_h^0\times W_h$.  Define their difference: $${\Xi}_{\bu_h}= \{{\Xi}_{\bu_0}, {\Xi}_{\bu_b}\}=\bu_h^{(1)}-\bu_h^{(2)}\in V_h^0, \qquad {\Xi}_{p_h}=p_h^{(1)}-p_h^{(2)}\in W_h.$$  Then $({\Xi}_{\bu_h}, {\Xi}_{p_h})$ satisfies the homogeneous system:
\begin{equation}\label{wgstokes}
\begin{split}
  (\nabla_w {\Xi}_{\bu_h}, \nabla_w \bv_h)+(\kappa^{-1} {\Xi}_{\bu_0}, \bv_0)+s({\Xi}_{\bu_h}, \bv_h)-(\nabla_w \cdot\bv_h, {\Xi}_{p_h}) =&0, \qquad \qquad \forall \bv_h\in V_h^0,\\
     (\nabla_w \cdot {\Xi}_{\bu_h}, q_h)=&0, \qquad\qquad \forall q_h\in W_h.
\end{split}
\end{equation}
Taking test functions $\bv_h={\Xi}_{\bu_h}$ and $q_h={\Xi}_{p_h}$ in \eqref{wgstokes} gives: 
$$\3bar {\Xi}_{\bu_h} \3bar=0.$$ 

By Lemma \ref{normdefine}, we deduce ${\Xi}_{\bu_h}\equiv 0$.
  Substituting ${\Xi}_{\bu_h}\equiv 0$ into the first equation of \eqref{wgstokes} yields:
$$
(\nabla_w\cdot \bv_h, {\Xi}_{p_h})=0, \qquad  \forall \bv_h\in V_h^0.
$$
By the inf-sup condition \eqref{infsup}, this implies
$\|{\Xi}_{p_h}\|=0$, i.e., $ {\Xi}_{p_h} \equiv  0$.

Thus, we conclude that
 $\bu_h^{(1)}\equiv \bu_h^{(2)}$ and $p_h^{(1)}\equiv p_h^{(2)}$, proving uniqueness.  
\end{proof}

\section{Error Equations}
Let  $\bu$ and $p$  represent the exact solutions to the Brinkman problem \eqref{model}, and let 
  $\bu_h \in V_{h}^0$ and $p_h\in W_h$ denote the corresponding discrete solutions produced by the WG Algorithm \ref{PDWG1}. We define the error functions  $e_{\bu_h}$ and $e_{p_h}$ for velocity and pressure as:
\begin{equation}\label{error} 
e_{\bu_h}=Q_h\bu-\bu_h,  \qquad e_{p_h}={\cal Q}_hp-p_h.
\end{equation}

\begin{lemma}\label{errorequa}
The error functions $e_{\bu_h}$ and $e_{p_h}$ defined in \eqref{error} satisfy the following system of error equations:
\begin{equation}\label{erroreqn}
\begin{split}
 &(\nabla_{w} e_{\bu_h}, \nabla_w  \bv_h) + s (e_{\bu_h}, \bv_h)+(\kappa^{-1} e_{\bu_0}, \bv_0) -(\nabla_w \cdot \bv_h, e_{p_h})  \\
 =&\ell_1 (\bu, \bv_h)+\ell_2 (  \bv_h, p)+s(Q_h\bu, \bv_h),  \qquad  \forall \bv_h\in V_h^0, \\& (\nabla_w \cdot e_{\bu_h}, q_h)= 0, \qquad \forall q_h\in W_h,
\end{split}
\end{equation}
where $\ell_1$ and $\ell_2$ are given by:
$$
\ell_1 (\bu, \bv_h)=\sum_{T\in {\cal T}_h}  
  \langle \bv_b-\bv_0,   ({\cal Q}_h-I) \nabla \bu \cdot \bn \rangle_{\partial T},
$$ 
$$
\ell_2 (  \bv_h, p)=\sum_{T\in {\cal T}_h} -\langle ({\cal Q}_h -I)p, (\bv_b-\bv_0)\cdot \bn \rangle_{\partial T}.
$$
\end{lemma}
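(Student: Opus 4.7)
The plan is to derive an auxiliary identity for the pair $(Q_h\bu,\mathcal{Q}_h p)$ that has the same left-hand structure as the WG scheme \eqref{WG}, but with a consistency residual on the right, and then subtract the WG scheme \eqref{WG} to isolate the error $(e_{\bu_h},e_{p_h})$. The second error equation will fall out almost immediately from Lemma \ref{Lemma5.1} together with the incompressibility constraint.

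First, I would expand $(\nabla_w Q_h\bu,\nabla_w\bv_h)$ element by element. By Lemma \ref{Lemma5.1}, $\nabla_w Q_h\bu=\mathcal{Q}_h(\nabla\bu)$, so using identity \eqref{2.4new} applied to $\bv_h$ with the polynomial test tensor $\bvarphi=\mathcal{Q}_h(\nabla\bu)\in[P_{k-1}(T)]^{d\times d}$, and then the projection property to drop $\mathcal{Q}_h$ against the polynomial $\nabla\bv_0$, I obtain
\begin{equation*}
(\nabla_w Q_h\bu,\nabla_w\bv_h)_T=(\nabla\bu,\nabla\bv_0)_T+\langle\bv_b-\bv_0,\mathcal{Q}_h(\nabla\bu)\cdot\bn\rangle_{\partial T}.
\end{equation*}
Applying integration by parts on the volume term and summing over $T\in\mathcal{T}_h$, the boundary contribution $\sum_T\langle\nabla\bu\cdot\bn,\bv_b\rangle_{\partial T}$ vanishes because $\bv_b$ is single-valued across interior faces, $\bv_b=0$ on $\partial\Omega$, and $\nabla\bu\cdot\bn$ is continuous across interior faces (by regularity of $\bu$). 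What remains is precisely
\begin{equation*}
(\nabla_w Q_h\bu,\nabla_w\bv_h)=-(\Delta\bu,\bv_0)+\ell_1(\bu,\bv_h).
\end{equation*}

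Next I would do the analogous computation for the pressure coupling. Using Lemma \ref{Lemma5.1} and \eqref{divnew} with the polynomial test function $w=\mathcal{Q}_h p\in P_{k-1}(T)$, together with $\nabla\cdot\bv_0\in P_{k-1}(T)$ so the projection is inactive on it, gives
\begin{equation*}
(\nabla_w\cdot\bv_h,\mathcal{Q}_h p)_T=(\nabla\cdot\bv_0,p)_T+\langle(\bv_b-\bv_0)\cdot\bn,\mathcal{Q}_h p\rangle_{\partial T}.
\end{equation*}
An elementwise integration by parts on the first term and the same single-valuedness argument on the $\bv_b$-contribution (now using continuity of $p$) collapse the boundary terms into a single jump against $(\mathcal{Q}_h-I)p$, producing $-(\nabla_w\cdot\bv_h,\mathcal{Q}_h p)=(\nabla p,\bv_0)+\ell_2(\bv_h,p)$. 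For the reactive term, the $L^2$-projection identity gives $(\kappa^{-1}Q_0\bu,\bv_0)=(\kappa^{-1}\bu,\bv_0)$ (constant $\kappa^{-1}$ and $\bv_0\in[P_k(T)]^d$). Adding the three pieces and invoking the momentum equation $-\Delta\bu+\nabla p+\kappa^{-1}\bu=\bf$ yields
\begin{equation*}
(\nabla_w Q_h\bu,\nabla_w\bv_h)+(\kappa^{-1}Q_0\bu,\bv_0)+s(Q_h\bu,\bv_h)-(\nabla_w\cdot\bv_h,\mathcal{Q}_h p)=(\bf,\bv_0)+\ell_1(\bu,\bv_h)+\ell_2(\bv_h,p)+s(Q_h\bu,\bv_h),
\end{equation*}
where the $s(Q_h\bu,\bv_h)$ term was inserted and carried over on both sides. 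Subtracting the WG scheme \eqref{WG} delivers the first error equation in \eqref{erroreqn}.

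For the second error equation, Lemma \ref{Lemma5.1} gives $\nabla_w\cdot Q_h\bu=\mathcal{Q}_h(\nabla\cdot\bu)$, and since $q_h\in P_{k-1}(T)$ the projection may be removed, so $(\nabla_w\cdot Q_h\bu,q_h)=(\nabla\cdot\bu,q_h)=0$ by incompressibility; subtracting the discrete divergence constraint gives $(\nabla_w\cdot e_{\bu_h},q_h)=0$. The main obstacle in the whole argument is the careful bookkeeping in the boundary terms in step one and step two: one must insert and subtract $\bv_b$ at the right moment, so that the part against $\bv_b$ telescopes (using its single-valuedness and the continuity of $\nabla\bu\cdot\bn$ and $p$), while the remaining part against $\bv_b-\bv_0$ combines with the projection-induced term to produce exactly the residuals $\ell_1$ and $\ell_2$. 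Once that telescoping is handled cleanly, the rest is algebraic.
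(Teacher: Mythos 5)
Your proposal is correct and takes essentially the same route as the paper's own proof: it rests on the identities \eqref{pro4} and \eqref{pro3} of Lemma \ref{Lemma5.1}, the choices $\bvarphi={\cal Q}_h\nabla\bu$ in \eqref{2.4new} and $w={\cal Q}_h p$ in \eqref{divnew}, elementwise integration by parts with the $\bv_b$-terms cancelling by single-valuedness and the homogeneous boundary condition, and finally subtraction of the scheme \eqref{WG}. The only (welcome) difference is cosmetic: you spell out the projection step $(\kappa^{-1}Q_0\bu,\bv_0)=(\kappa^{-1}\bu,\bv_0)$, which the paper uses tacitly.
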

\begin{proof}  Using identity \eqref{pro4} and integrating by parts, we choose $\bvarphi= {\cal Q}_h \nabla \bu$ in  \eqref{2.4new}, yielding:
\begin{equation}\label{term1}
\begin{split}
&\sum_{T\in {\cal T}_h} (\nabla_{w} Q_h\bu, \nabla_w  \bv_h)_T\\=&\sum_{T\in {\cal T}_h}\sum_{T\in {\cal T}_h} ({\cal Q}_h(\nabla \bu), \nabla_w  \bv_h)_T\\ 
=&\sum_{T\in {\cal T}_h}   (\nabla \bv_0,  {\cal Q}_h \nabla \bu)_T+
  \langle \bv_b-\bv_0,   {\cal Q}_h \nabla \bu \cdot \bn \rangle_{\partial T}\\
=& \sum_{T\in {\cal T}_h}(\nabla \bv_0,   \nabla \bu)_T+
  \langle \bv_b-\bv_0,   {\cal Q}_h \nabla \bu \cdot \bn \rangle_{\partial T}\\
  =& \sum_{T\in {\cal T}_h}-(  \bv_0,   \Delta \bu)_T+\langle \nabla \bu\cdot\bn, \bv_0\rangle_{\partial T}+
  \langle \bv_b-\bv_0,   {\cal Q}_h \nabla \bu \cdot \bn \rangle_{\partial T}\\
  =& \sum_{T\in {\cal T}_h}-(  \bv_0,   \Delta \bu)_T+ 
  \langle \bv_b-\bv_0,   ({\cal Q}_h-I) \nabla \bu \cdot \bn \rangle_{\partial T},
\end{split}
\end{equation}
where the boundary contribution  $\sum_{T\in {\cal T}_h} \langle \nabla \bu\cdot\bn, \bv_b\rangle_{\partial T}=\langle \nabla \bu\cdot\bn, \bv_b\rangle_{\partial \Omega}=0$ because $\bv_b=0$ on $\partial \Omega$.

Applying   \eqref{divnew} with  $w={\cal Q}_h  p$ and using integration by parts, we get:
 \begin{equation}\label{termm}
     \begin{split}
  &\sum_{T\in {\cal T}_h} (\nabla_w \cdot \bv_h, {\cal Q}_h p)_T \\ =& \sum_{T\in {\cal T}_h} (\nabla \cdot \bv_0,  {\cal Q}_h p)_T + \langle {\cal Q}_h p, (\bv_b-\bv_0)\cdot \bn \rangle_{\partial T}\\
  =& \sum_{T\in {\cal T}_h} (\nabla \cdot \bv_0,    p)_T + \langle {\cal Q}_h p, (\bv_b-\bv_0)\cdot \bn \rangle_{\partial T}\\
  =& \sum_{T\in {\cal T}_h}-( \bv_0,    \nabla p)_T 
+\langle p, \bv_0\cdot\bn\rangle_{\partial T}+ \langle {\cal Q}_h p, (\bv_b-\bv_0)\cdot \bn \rangle_{\partial T}\\    =& \sum_{T\in {\cal T}_h}-( \bv_0, \nabla p)_T + \langle ( {\cal Q}_h -I)p, (\bv_b-\bv_0)\cdot \bn \rangle_{\partial T},
\end{split}
 \end{equation}
where we again used the vanishing trace of $\bv_b$ on $\partial \Omega$.

Subtracting \eqref{termm} from \eqref{term1} and incorporating the first equation from \eqref{model}, we derive:
\begin{equation*}  
\begin{split}
&\sum_{T\in {\cal T}_h}(\nabla_{w} Q_h\bu, \nabla_w  \bv_h)_T  +(\kappa ^{-1} Q_0\bu, \bv_0)_T+s(Q_h\bu, \bv_h)-(\nabla_w \cdot \bv_h, {\cal Q}_hp)_T \\=& \sum_{T\in {\cal T}_h}-(  \bv_0,   \Delta \bu)_T+ 
  \langle \bv_b-\bv_0,   ({\cal Q}_h-I) \nabla \bu \cdot \bn \rangle_{\partial T} + 
 ( \bv_0, \nabla p)_T \\&-\langle ({\cal Q}_h -I)p, (\bv_b-\bv_0)\cdot \bn \rangle_{\partial T}+(\kappa ^{-1}  \bu, \bv_0)_T +s(Q_h\bu, \bv_h)\\
 =&\sum_{T\in {\cal T}_h} (  \bv_0,   \bf)_T+ 
  \langle \bv_b-\bv_0,   ({\cal Q}_h-I) \nabla \bu \cdot \bn \rangle_{\partial T}   \\&-\langle ({\cal Q}_h -I)p, (\bv_b-\bv_0)\cdot \bn \rangle_{\partial T}+s(Q_h\bu, \bv_h).
  \end{split}
\end{equation*}
Subtracting the first equation of  \eqref{WG}  from the above   leads to the first error equation in \eqref{erroreqn}.

Note that using \eqref{pro3} and  the second equation of \eqref{model}, we have:
$$
0= (\nabla_w \cdot Q_h\bu, q_h)=\sum_{T\in {\cal T}_h}({\cal Q}_h \nabla\cdot \bu, q_h)_T=\sum_{T\in {\cal T}_h}(\nabla\cdot \bu, q_h)_T=0.
$$
Subtracting this from the second equation of \eqref{WG} yields the second part of \eqref{erroreqn}, completing the proof.

\end{proof}

\section{Error Estimates} 
\begin{lemma}
Let  $\bu\in [H^{k+1}(\Omega)]^d$, $q\in H^{k}(\Omega)$, and let $\bv_h=\{\bv_0, \bv_b\}\in V_h^0$,   $q_h \in W_h$. Then the following bounds hold:
\begin{equation}\label{es1}
   |\ell_1(\bu, \bv_h)| \leq  Ch^k \|\bu\|_{k+1}\3bar \bv_h \3bar, 
\end{equation}
\begin{equation}\label{es2}
   |\ell_2(\bv_h, p)| \leq  Ch^k \|p\|_{k}\3bar \bv_h \3bar,
\end{equation}
\begin{equation}\label{es3}
   |s(Q_h\bu, \bv_h)| \leq  Ch^k \|\bu\|_{k+1}\3bar \bv_h \3bar.
\end{equation}
\end{lemma}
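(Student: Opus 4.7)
The plan is to treat all three estimates by the same template: apply Cauchy--Schwarz on each boundary piece $\partial T$ to split the integrand into a ``data factor'' and a ``test-function factor'', then insert a suitable power of $h_T$ so that the test-function factor is already controlled by $\3bar\bv_h\3bar$ through the jump term $h_T^{-1/2}\|\bv_0-\bv_b\|_{\partial T}$, and finally bound the data factor using the trace inequality \eqref{tracein} (not the polynomial version \eqref{trace}, since $(\mathcal{Q}_h-I)\nabla\bu$ and $(\mathcal{Q}_h-I)p$ are not piecewise polynomial) combined with the approximation results of Lemma \ref{lem}.

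For \eqref{es1}, I write
$$|\ell_1(\bu,\bv_h)|\le\sum_{T\in\mathcal{T}_h} h_T^{-1/2}\|\bv_0-\bv_b\|_{\partial T}\cdot h_T^{1/2}\|(\mathcal{Q}_h-I)\nabla\bu\|_{\partial T},$$
apply discrete Cauchy--Schwarz over elements, and recognize the first resulting sum as a summand of $\3bar\bv_h\3bar^2$. To handle $\sum_T h_T\|(\mathcal{Q}_h-I)\nabla\bu\|_{\partial T}^2$, I use \eqref{tracein} to pass from $\partial T$ to $T$, producing an $L^2(T)$ term with weight $1$ and an $H^1(T)$ term with weight $h_T^2$; both are then controlled by \eqref{error3} with $n=k-1$ (at $s=0$ and $s=1$ respectively), yielding the bound $Ch^{2k}\|\bu\|_{k+1}^2$. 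Estimate \eqref{es2} is obtained by the same procedure, with $(\mathcal{Q}_h-I)p$ in place of $(\mathcal{Q}_h-I)\nabla\bu$ and \eqref{error1} with $n=k-1$ replacing \eqref{error3}.

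For \eqref{es3}, the key observation is that on each face $e\subset\partial T$ the restriction $Q_0\bu|_e$ lies in $[P_k(e)]^d$, so $Q_b(Q_0\bu|_e)=Q_0\bu|_e$. Hence
$$Q_0\bu-Q_b\bu=Q_b(Q_0\bu-\bu)\quad\text{on }\partial T,$$
and the $L^2$-stability of $Q_b$ gives $\|Q_0\bu-Q_b\bu\|_{\partial T}\le\|\bu-Q_0\bu\|_{\partial T}$. After inserting factors of $h_T^{\pm 1/2}$ and applying Cauchy--Schwarz as before, the remaining piece $\sum_T h_T^{-1}\|\bu-Q_0\bu\|_{\partial T}^2$ is bounded via \eqref{tracein} and \eqref{error2} with $m=k$ (at $s=0,1$), producing $Ch^{2k}\|\bu\|_{k+1}^2$.

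The main obstacle is not analytical but bookkeeping: choosing the correct regularity index in Lemma \ref{lem} ($n=k-1$ for the projection errors of $\nabla\bu$ and $p$, but $m=k$ for the projection error of $\bu$ itself) and the correct scaling of $h_T$ so that the trace-inequality balance produces exactly $h^k$ on the data side. Once those choices are fixed and the $L^2$-stability of $Q_b$ is used to simplify the stabilization term, each of the three estimates reduces to a single line of weighted Cauchy--Schwarz.
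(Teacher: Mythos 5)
Your proposal is correct and follows essentially the same route as the paper's proof: weighted Cauchy--Schwarz on $\partial T$, the trace inequality \eqref{tracein}, and Lemma \ref{lem} with $n=k-1$ for \eqref{es1}--\eqref{es2} and $m=k$ for \eqref{es3}. The only difference is that you explicitly justify the step $\|Q_0\bu-Q_b\bu\|_{\partial T}\leq\|\bu-Q_0\bu\|_{\partial T}$ via $Q_b(Q_0\bu|_e)=Q_0\bu|_e$ and the $L^2$-stability of $Q_b$, a point the paper uses silently.
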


\begin{proof}
To derive \eqref{es1}, we apply the Cauchy–Schwarz inequality, the trace inequality \eqref{tracein}, and the approximation estimate \eqref{error3} with $n = k-1$:
 \begin{equation*} 
\begin{split}
|\ell_1(\bu, \bv_h)|\leq &(\sum_{T\in {\cal T}_h}  
  h_T^{-1}\|\bv_b-\bv_0\|_{\partial T} ^2)^{\frac{1}{2}} (\sum_{T\in {\cal T}_h}  
 h_T \|({\cal Q}_h-I) \nabla \bu \cdot \bn\|_{\partial T} ^2)^{\frac{1}{2}} \\
\leq & \3bar \bv_h \3bar(\sum_{T\in {\cal T}_h}  
 \|({\cal Q}_h-I) \nabla \bu \cdot \bn\|_{T} ^2+ h_T^2 \|({\cal Q}_h-I) \nabla \bu \cdot \bn\|_{1, T} ^2)^{\frac{1}{2}}\\ 
 \leq & Ch^k \|\bu\|_{k+1}\3bar \bv_h \3bar.
\end{split}
\end{equation*} 

To estimate \eqref{es2}, we again use the Cauchy–Schwarz inequality, the trace inequality \eqref{tracein},   along with \eqref{error1} with $n=k-1$:
 \begin{equation*} 
\begin{split}
|\ell_2(\bv_h, p)|\leq &(\sum_{T\in {\cal T}_h}  
  h_T^{-1}\|\bv_b-\bv_0\|_{\partial T} ^2)^{\frac{1}{2}} (\sum_{T\in {\cal T}_h}  
 h_T \|({\cal Q}_h-I)p\|_{\partial T} ^2)^{\frac{1}{2}} \\
\leq & \3bar \bv_h \3bar (\sum_{T\in {\cal T}_h}  
 \|({\cal Q}_h-I)p\|_{T} ^2+ h_T^2 \|({\cal Q}_h-I)p\|_{1, T} ^2)^{\frac{1}{2}}\\ 
 \leq & Ch^k \|p\|_{k}\3bar \bv_h \3bar.
\end{split}
\end{equation*}

For \eqref{es3}, using the Cauchy–Schwarz inequality, the trace inequality \eqref{tracein},  and estimate \eqref{error2} with $m=k$ yields:
\begin{equation*} 
\begin{split}
    |s(Q_h\bu, \bv_h)| =&|\sum_{T\in {\cal T}_h} h_T^{-1} \langle Q_0\bu-Q_b\bu, \bv_0-\bv_b\rangle_{\partial T}| \\ \leq & \Big( \sum_{T\in {\cal T}_h} h_T^{-1} \| Q_0\bu-Q_b\bu\|^2_{\partial T}\Big)^{\frac{1}{2}}  \Big(  \sum_{T\in {\cal T}_h} h_T^{-1} \|\bv_0-\bv_b\|^2_{\partial T}\Big)^{\frac{1}{2}}
    \\ \leq & \Big(  \sum_{T\in {\cal T}_h} h_T^{-1} \| Q_0\bu- \bu\|^2_{\partial T}\Big)^{\frac{1}{2}}  \3bar \bv_h\3bar
  \\ \leq & \Big(  \sum_{T\in {\cal T}_h} h_T^{-2} \| Q_0\bu- \bu\|^2_{T}+ \| Q_0\bu- \bu\|^2_{1, T}\Big)^{\frac{1}{2}}  \3bar \bv_h\3bar \\ 
 \leq & Ch^k \|\bu\|_{k+1}\3bar \bv_h \3bar.
\end{split}   
\end{equation*}

This completes the proof of the lemma.
\end{proof}
\begin{theorem}
Let  
$(\bu, p)$ be the exact solution of the Brinkman system \eqref{model} with regularity   
$\bu\in [H^{k+1}(\Omega)]^d$ and $p\in H^k(\Omega)$. Let $(\bu_h, p_h)$  be the corresponding numerical solution produced by the  WG method \ref{PDWG1}. Then, the following error estimate holds:
\begin{equation}\label{trinorm}
\3bar e_{\bu_h}\3bar +\|e_{p_h}\|\leq Ch^{k}(\|\bu\|_{k+1}+\|p\|_k).
\end{equation}
\end{theorem}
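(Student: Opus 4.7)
The plan is to bound the velocity error via an energy-type argument built on the error equations of Lemma \ref{errorequa}, and then recover the pressure bound through the inf-sup condition \eqref{infsup}.

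First, I would derive the bound on $\3bar e_{\bu_h}\3bar$. The natural choice is to test the first error equation in \eqref{erroreqn} with $\bv_h=e_{\bu_h}\in V_h^0$ and the second error equation with $q_h=e_{p_h}\in W_h$. The second equation then gives $(\nabla_w\cdot e_{\bu_h},e_{p_h})=0$, which cancels the troublesome $-(\nabla_w\cdot \bv_h,e_{p_h})$ term in the first equation. Using the definition \eqref{3norm} of the triple-bar norm, what remains on the left-hand side is exactly $\3bar e_{\bu_h}\3bar^2$. Applying \eqref{es1}, \eqref{es2}, \eqref{es3} to bound the three terms on the right, I obtain
\begin{equation*}
\3bar e_{\bu_h}\3bar^2 \le Ch^k(\|\bu\|_{k+1}+\|p\|_k)\,\3bar e_{\bu_h}\3bar,
\end{equation*}
from which the velocity estimate $\3bar e_{\bu_h}\3bar \le Ch^k(\|\bu\|_{k+1}+\|p\|_k)$ follows at once.

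Next, I would recover the pressure bound using the inf-sup condition. Rearranging the first error equation in \eqref{erroreqn} gives, for any $\bv_h\in V_h^0$,
\begin{equation*}
(\nabla_w\cdot \bv_h,e_{p_h}) = (\nabla_w e_{\bu_h},\nabla_w \bv_h)+s(e_{\bu_h},\bv_h)+(\kappa^{-1}e_{\bu_0},\bv_0) - \ell_1(\bu,\bv_h)-\ell_2(\bv_h,p)-s(Q_h\bu,\bv_h).
\end{equation*}
By Cauchy--Schwarz applied termwise, the first three terms are bounded by $C\3bar e_{\bu_h}\3bar\,\3bar \bv_h\3bar$, while \eqref{es1}--\eqref{es3} handle the remaining three. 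Dividing by $\3bar \bv_h\3bar$ and taking the supremum over $V_h^0$, the inf-sup condition \eqref{infsup} yields
\begin{equation*}
\|e_{p_h}\| \le C\sup_{\bv_h\in V_h^0}\frac{(\nabla_w\cdot \bv_h,e_{p_h})}{\3bar \bv_h\3bar} \le C\bigl(\3bar e_{\bu_h}\3bar + h^k(\|\bu\|_{k+1}+\|p\|_k)\bigr),
\end{equation*}
and combining with the velocity estimate already established gives \eqref{trinorm}.

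I do not anticipate a genuine obstacle here: once the error equations are in hand, the argument is the standard saddle-point template, and all of the technical work was absorbed into Lemmas \ref{errorequa} and the $\ell_1,\ell_2,s$ bounds. The only small point requiring care is the sign bookkeeping that produces the cancellation of $(\nabla_w\cdot e_{\bu_h},e_{p_h})$ when the two test functions are inserted simultaneously; beyond that, the entire proof reduces to applying \eqref{es1}--\eqref{es3} and \eqref{infsup} in sequence.
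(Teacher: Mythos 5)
Your proposal is correct and follows essentially the same argument as the paper: testing the error equations with $\bv_h=e_{\bu_h}$, $q_h=e_{p_h}$ so the pressure term cancels and the left side becomes $\3bar e_{\bu_h}\3bar^2$, bounding the right side via \eqref{es1}--\eqref{es3}, and then recovering $\|e_{p_h}\|$ from the rearranged first error equation together with the inf-sup condition \eqref{infsup}. No gaps; this matches the paper's proof step for step.
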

\begin{proof}
Choose test functions $\bv_h=e_{\bu_h}$ and $q_h=e_{p_h}$ in the error equations \eqref{erroreqn}. Using the bounds from \eqref{es1}-\eqref{es3}, we obtain:
 \begin{equation*} 
\begin{split}
\3bar e_{\bu_h} \3bar^2 =&\ell_1(\bu, e_{\bu_h})+\ell_2(e_{\bu_h}, p)+s(Q_h\bu, e_{\bu_h})\\
\leq & Ch^{k}(\|\bu\|_{k+1}+\|p\|_k) \3bar e_{\bu_h} \3bar,
\end{split}
\end{equation*}
which implies
\begin{equation}\label{esteh}
    \3bar e_{\bu_h} \3bar\leq   Ch^{k}(\|\bu\|_{k+1}+\|p\|_k).
\end{equation}

To estimate $\|e_{p_h}\|$, we revisit the first equation in \eqref{erroreqn}, apply the Cauchy–Schwarz inequality, the estimates \eqref{es1}-\eqref{es3}, and \eqref{esteh} gives
\begin{equation*} 
\begin{split}
    &|(\nabla_w \cdot \bv_h, e_{p_h})|  \\
 =&|-\ell_1 (\bu, \bv_h)-\ell_2 (  \bv_h, p)-s(Q_h\bu, \bv_h)  +
 (\nabla_{w} e_{\bu_h}, \nabla_w  \bv_h) + s (e_{\bu_h}, \bv_h)+(\kappa^{-1} e_{\bu_0}, \bv_0)|
 \\ \leq & Ch^{k}(\|\bu\|_{k+1}+\|p\|_k) \3bar \bv_h \3bar +\3bar e_{\bu_h}\3bar  \3bar \bv_h \3bar \\
 \leq & Ch^{k}(\|\bu\|_{k+1}+\|p\|_k) \3bar \bv_h \3bar.\end{split}
\end{equation*}
By the inf-sup condition \eqref{infsup}, this yields
\begin{equation}\label{estph}
    \|e_{p_h} \| \leq Ch^{k}(\|\bu\|_{k+1}+\|p\|_k).
\end{equation}

Combining \eqref{esteh} and \eqref{estph} proves the theorem.
\end{proof}

\section{Error Analysis in the $L^2$ Norm}
To derive an error estimate in the $L^2$ norm, we employ a classical duality argument. Recall that the velocity error is given by:
$$e_{\bu_h}=\bu-\bu_h=\{e_{\bu_0}, e_{\bu_b}\}=\{Q_0\bu-\bu_0, Q_b\bu-\bu_b\}.$$
We now consider the corresponding dual problem associated with the Brinkman system \eqref{model}. Specifically, we seek a pair  $(\bw, q) \in [H^2(\Omega)]^d \times H^1(\Omega)$ satisfying:
\begin{equation}\label{dual}
\begin{split}
   - \Delta \bw+\kappa^{-1} \bw+\nabla q &=e_{\bu_0}, \qquad \text{in}\ \Omega,\\
\nabla \cdot\bw&=0,\qquad\quad \text{in}\ \Omega,  \\
\bw&=0, \quad \qquad\text{on}\ \partial\Omega.
    \end{split}
\end{equation}
We assume the dual solution satisfies the regularity bound:
\begin{equation}\label{regu2}
 \|\bw\|_2+\|q\|_1\leq C\|e_{\bu_0}\|.
 \end{equation}
 
 \begin{theorem}
Let $(\bu, p)\in [H^{k+1}(\Omega)]^d \times H^k(\Omega)$ be the exact solutions to the Brinkman problem \eqref{model}, and let
 $(\bu_h, p_h)\in V_h^0 \times  W_h$ be its approximation via the Weak Galerkin Algorithm \ref{PDWG1}. Under the regularity condition \eqref{regu2}, there exists a constant $C$ such that:
\begin{equation*}
\|Q_0\bu-\bu_0\|\leq Ch^{k+1}(\|\bu\|_{k+1}+\|p\|_k).
\end{equation*}
 \end{theorem}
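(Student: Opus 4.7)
The plan is a classical duality argument adapted to the WG discretization, building on the error identity of Lemma \ref{errorequa}. First I would pair the momentum equation of the dual problem \eqref{dual} with $e_{\bu_0}$ to write
\begin{equation*}
\|e_{\bu_0}\|^2=-(e_{\bu_0},\Delta\bw)+(\kappa^{-1}\bw,e_{\bu_0})+(e_{\bu_0},\nabla q),
\end{equation*}
and then re-express the two non-symmetric terms through the discrete weak operators acting on $Q_h\bw$ and $\cal Q_h q$. Repeating the integration-by-parts calculations performed in \eqref{term1} and \eqref{termm}, with the roles of the smooth datum and the discrete test function swapped ($\bw$ playing the part of $\bu$, $e_{\bu_h}$ the part of $\bv_h$, and using that $e_{\bu_b}|_{\partial\Omega}=0$), I expect
\begin{equation*}
-(e_{\bu_0},\Delta\bw)=(\nabla_w Q_h\bw,\nabla_w e_{\bu_h})-\ell_1(\bw,e_{\bu_h}),\qquad (e_{\bu_0},\nabla q)=-(\nabla_w\cdot e_{\bu_h},\cal Q_h q)-\ell_2(e_{\bu_h},q),
\end{equation*}
with $\ell_1,\ell_2$ exactly as in Lemma \ref{errorequa}.

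Next I would substitute $\bv_h=Q_h\bw\in V_h^0$ (admissible because $\bw|_{\partial\Omega}=0$) and $q_h=\cal Q_h q$ (renormalized by a constant so as to lie in $W_h$) into the error equations \eqref{erroreqn}. Two pairings vanish automatically: $(\nabla_w\cdot Q_h\bw,e_{p_h})=(\cal Q_h\nabla\cdot\bw,e_{p_h})=0$ by Lemma \ref{Lemma5.1} together with $\nabla\cdot\bw=0$, while $(\nabla_w\cdot e_{\bu_h},\cal Q_h q)=0$ by the second line of \eqref{erroreqn}. Since $\kappa$ is constant, $\kappa^{-1}e_{\bu_0}|_T\in[P_k(T)]^d$, so the $L^2$-orthogonality of $Q_0$ forces
\begin{equation*}
(\kappa^{-1}\bw,e_{\bu_0})-(\kappa^{-1}e_{\bu_0},Q_0\bw)=(\kappa^{-1}e_{\bu_0},\bw-Q_0\bw)=0,
\end{equation*}
and the two $\kappa^{-1}$ contributions cancel. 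Combining everything produces the key identity
\begin{equation*}
\|e_{\bu_0}\|^2=\ell_1(\bu,Q_h\bw)+\ell_2(Q_h\bw,p)+s(Q_h\bu,Q_h\bw)-s(e_{\bu_h},Q_h\bw)-\ell_1(\bw,e_{\bu_h})-\ell_2(e_{\bu_h},q).
\end{equation*}

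The final step is to bound each of the six right-hand terms by $Ch^{k+1}(\|\bu\|_{k+1}+\|p\|_k)(\|\bw\|_2+\|q\|_1)$. The argument mirrors \eqref{es1}--\eqref{es3}, with the extra factor of $h$ supplied in one of two complementary ways. For the first four terms the factor paired with $Q_h\bw$ satisfies
\begin{equation*}
\Big(\sum_{T\in\T_h}h_T^{-1}\|Q_0\bw-Q_b\bw\|_{\partial T}^2\Big)^{1/2}\le Ch\|\bw\|_2,
\end{equation*}
by the trace inequality \eqref{tracein} and Lemma \ref{lem} with $m=1$; this contributes the extra $h$ on top of the usual $h^k\|\bu\|_{k+1}$ or $h^k\|p\|_k$ bounds. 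For the last two terms the extra $h$ comes from applying \eqref{es1}--\eqref{es2} to the low-regularity data $\bw\in[H^2(\Omega)]^d$ and $q\in H^1(\Omega)$, combined with the a priori bound $\3bar e_{\bu_h}\3bar\le Ch^k(\|\bu\|_{k+1}+\|p\|_k)$ from \eqref{esteh}. Invoking the regularity estimate \eqref{regu2} and dividing through by $\|e_{\bu_0}\|$ then yields the stated $L^2$ bound.

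The main obstacle will be the bookkeeping in the first step: the two integrations by parts must be executed so that every surviving boundary integral is repackaged precisely as $\ell_1(\bw,e_{\bu_h})$ or $\ell_2(e_{\bu_h},q)$, with the global boundary contributions killed by $e_{\bu_b}|_{\partial\Omega}=0$, and the $\kappa^{-1}$ cancellation recognized. Once the identity above is in place, the remaining six estimates are a direct reprise of the computations used for \eqref{es1}--\eqref{es3}, with only the regularity indices shifted to exploit the $H^2$/$H^1$ smoothness of the dual solution.
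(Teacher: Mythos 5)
Your proposal is correct and takes essentially the same route as the paper: the identical duality argument via \eqref{dual}, the reuse of \eqref{term1}--\eqref{termm} with $\bw,q$ in place of $\bu,p$, the substitution $\bv_h=Q_h\bw$, $q_h={\cal Q}_h q$ into \eqref{erroreqn}, the same $\kappa^{-1}$ cancellation and vanishing of both divergence pairings, and the same six estimates (the paper merely merges your two stabilizer terms $s(Q_h\bu,Q_h\bw)-s(e_{\bu_h},Q_h\bw)$ into the single term $s(Q_h\bw,\bu_h)$). Your care over the mean-zero normalization of ${\cal Q}_h q$ is a point the paper leaves implicit, and it is harmless since $(\nabla_w\cdot e_{\bu_h},c)=0$ for any constant $c$.
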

 
 \begin{proof}
We test the first equation of the dual problem \eqref{dual} with  $e_{\bu_0}$:
 \begin{equation}\label{e1}
 \begin{split}
 \|e_{\bu_0}\|^2 =&(- \Delta \bw+\kappa^{-1}\bw+\nabla q, e_{\bu_0}).
 \end{split}
 \end{equation}
Using identity \eqref{term1} with  $\bu=\bw$ and  $\bv_h=e_{\bu_h}$, we get:
$$
\sum_{T\in {\cal T}_h}(- \Delta \bw, e_{\bu_0})_T=\sum_{T\in {\cal T}_h} (\nabla_w Q_h\bw, \nabla_w e_{\bu_h})_T-\langle e_{\bu_b}-e_{\bu_0}, ({\cal Q}_h-I)\nabla \bw\cdot\bn \rangle_{\partial T}.
$$
Similarly, from identity \eqref{termm} with  $p=q$ and $\bv_h=e_{\bu_h}$:
$$
\sum_{T\in {\cal T}_h} (\nabla q, e_{\bu_0})=\sum_{T\in {\cal T}_h} -(\nabla_w \cdot e_{\bu_h}, {\cal Q}_hq)_T+\langle ({\cal Q}_h-I)q, (e_{\bu_b}-e_{\bu_0})\cdot\bn\rangle_{\partial T}.
$$
Substituting the above two identities into the identity  \eqref{e1} and using  the error equation \eqref{erroreqn} with $\bv_h=Q_h\bw$ and $q_h={\cal Q}_hq$, we obtain:
\begin{equation}\label{e2}
 \begin{split}
 &\|e_{\bu_0}\|^2 \\=&\sum_{T\in {\cal T}_h} (\nabla_wQ_h\bw, \nabla_w e_{\bu_h})_T-\langle e_{\bu_b}-e_{\bu_0}, ({\cal Q}_h-I)\nabla \bw\cdot\bn \rangle_{\partial T}\\&-(\nabla_w \cdot e_{\bu_h}, {\cal Q}_hq)_T+\langle ({\cal Q}_h-I)q, (e_{\bu_b}-e_{\bu_0})\cdot\bn\rangle_{\partial T}+(\kappa^{-1}Q_0 \bw, e_{\bu_0})_T\\
 &+s(Q_h\bw, e_{\bu_h})-s(Q_h\bw, e_{\bu_h})\\
 =& \ell_1(\bu, Q_h\bw)+\ell_2(Q_h\bw, p)+s(Q_h\bu, Q_h\bw)+\sum_{T\in {\cal T}_h} (\nabla_w \cdot(Q_h\bw), e_{p_h})_T\\
 &-\langle e_{\bu_b}-e_{\bu_0}, ({\cal Q}_h-I)\nabla \bw\cdot\bn \rangle_{\partial T}\\&+\langle ({\cal Q}_h-I)q, (e_{\bu_b}-e_{\bu_0})\cdot\bn\rangle_{\partial T}-s(Q_h\bw, e_{\bu_h})\\
 =& \ell_1(\bu, Q_h\bw)+\ell_2(Q_h\bw, p) +\sum_{T\in {\cal T}_h} (\nabla_w \cdot(Q_h\bw), e_{p_h})_T\\
 &-\langle e_{\bu_b}-e_{\bu_0}, ({\cal Q}_h-I)\nabla \bw\cdot\bn \rangle_{\partial T}+\langle ({\cal Q}_h-I)q, (e_{\bu_b}-e_{\bu_0})\cdot\bn\rangle_{\partial T}\\&+s(Q_h\bw, \bu_h)
\\
=&\sum_{i=1}^6 I_i.
 \end{split}
 \end{equation}

 We now estimate each term $I_i (i=1, \cdots, 6)$ individually.
 
Estimate of $I_1$:  Using the Cauchy-Schwarz inequality, the trace inequality \eqref{tracein}, along with the estimate \eqref{error2} ($m = 1$) and the estimate  \eqref{error3} ($n = k-1$):
\begin{equation*} 
 \begin{split}
&|\ell_1(\bu, Q_h\bw)|\\=&|\sum_{T\in {\cal T}_h}  
  \langle Q_b\bw-Q_0\bw,   ({\cal Q}_h-I) \nabla \bu \cdot \bn \rangle_{\partial T}|\\
  \leq & (\sum_{T\in {\cal T}_h}  
  \| Q_b\bw-Q_0\bw\|^2_{\partial T})^
{\frac{1}{2}} (\sum_{T\in {\cal T}_h}\| ({\cal Q}_h-I) \nabla \bu \cdot \bn \|^2_{\partial T})
\\
\leq & (\sum_{T\in {\cal T}_h}  
  h_T^{-1}\|  \bw-Q_0\bw\|^2_{T}+h_T\|  \bw-Q_0\bw\|^2_{1, T})^
{\frac{1}{2}}\\
&\cdot(\sum_{T\in {\cal T}_h}  h_T^{-1}\| ({\cal Q}_h-I) \nabla \bu \cdot \bn \|^2_{T}+h_T \| ({\cal Q}_h-I) \nabla \bu \cdot \bn \|^2_{1, T})\\
\leq & Ch^{-1}h^2\|\bw\|_2 h^k\|\bu\|_{k+1}\leq Ch^{k+1}\|\bw\|_2  \|\bu\|_{k+1}.
  \end{split}
 \end{equation*}
Estimate of  $I_2$: Using the Cauchy-Schwarz inequality, the trace inequality \eqref{tracein}, using the estimate \eqref{error1} ($n = k-1$) and the estimate \eqref{error2} ($m = 1$):
\begin{equation*} 
 \begin{split}
&|\ell_2(Q_h\bw, p)| \\=&|\sum_{T\in {\cal T}_h} -\langle ({\cal Q}_h -I)p, (Q_b\bw-Q_0\bw)\cdot \bn \rangle_{\partial T}|\\
\leq & (\sum_{T\in {\cal T}_h}\| ({\cal Q}_h -I)p\|_{\partial T}^2)^\frac{1}{2}(\sum_{T\in {\cal T}_h} \|Q_b\bw-Q_0\bw)\cdot \bn \|_{\partial T}^2)^\frac{1}{2}\\
\leq & (\sum_{T\in {\cal T}_h}h_T^{-1}\| ({\cal Q}_h -I)p\|_{T}^2+h_T\| ({\cal Q}_h -I)p\|_{1, T}^2)^\frac{1}{2}\\&\cdot (\sum_{T\in {\cal T}_h} h_T^{-1}\|(\bw-Q_0\bw)\cdot \bn \|_{T}^2+h_T \|(\bw-Q_0\bw)\cdot \bn \|_{1, T}^2)^\frac{1}{2}\\
\leq &Ch^{-1}h^2\|\bw\|_2 h^k\|p\|_{k} \leq Ch^{k+1} \|\bw\|_2  \|p\|_{k}.
 \end{split}
 \end{equation*} 
 Estimate of  $I_3$: Apply  
\eqref{pro3} and the second equality in \eqref{dual}, we have
\begin{equation*} 
 \begin{split}
 |\sum_{T\in {\cal T}_h} (\nabla_w\cdot (Q_h\bw), e_{p_h})_T|= |\sum_{T\in {\cal T}_h} ({\cal Q}_h\nabla \cdot \bw, e_{p_h})_T| 
   =  0.
  \end{split}
 \end{equation*} 
Estimate of $I_4$:  Applying the Cauchy-Schwarz inequality, trace inequality \eqref{tracein},  estimate \eqref{error3} with $n=0$,  the error estimate   \eqref{trinorm},  gives
\begin{equation*}
\begin{split}
&|\sum_{T\in {\cal T}_h}\langle e_{\bu_b}-e_{\bu_0}, ({\cal Q}_h-I)\nabla \bw\cdot\bn \rangle_{\partial T}|\\
\leq & (\sum_{T\in {\cal T}_h}h_T^{-1}\|e_{\bu_b}-e_{\bu_0}\|_{\partial T}^2)^\frac{1}{2}  (\sum_{T\in {\cal T}_h} h_T\|({\cal Q}_h-I)\nabla \bw\cdot\bn \|_{\partial T}^2)^\frac{1}{2}\\
\leq & \3bar e_{\bu_h}\3bar (\sum_{T\in {\cal T}_h} \|({\cal Q}_h-I)\nabla \bw\cdot\bn \|_{T}^2+h_T^2\|({\cal Q}_h-I)\nabla \bw\cdot\bn \|_{1,T}^2)^\frac{1}{2}\\
 \leq & \3bar e_{\bu_h}\3bar  h\|\bw\|_2
 \\
 \leq & Ch^{k+1}(\|\bu\|_{k+1}+\|p\|_k)\|\bw\|_2.
 \end{split}
 \end{equation*} 
Estimate of $I_5$:  Applying the Cauchy-Schwarz inequality, the trace inequality \eqref{tracein},  estimate \eqref{error1} with $n=0$,  and the error bound  \eqref{trinorm}, leads to
\begin{equation*}
\begin{split}
&|\sum_{T\in {\cal T}_h}\langle ({\cal Q}_h-I)q, (e_{\bu_b}-e_{\bu_0})\cdot\bn\rangle_{\partial T}|\\
 \leq&  (\sum_{T\in {\cal T}_h}h_T\|({\cal Q}_h-I)q\|_{\partial T}  ^2)^\frac{1}{2}  (\sum_{T\in {\cal T}_h}h_T^{-1}\|(e_{\bu_b}-e_{\bu_0})\cdot\bn\|_{\partial T}^2)^\frac{1}{2} \\
  \leq&  (\sum_{T\in {\cal T}_h} \|({\cal Q}_h-I)q\|_{T}  ^2+h_T^2\|({\cal Q}_h-I)q\|_{1, T}  ^2)^\frac{1}{2}  \3bar e_{\bu_h}\3bar\\
    \leq &  Ch\|q\|_1 h^k(\|\bu\|_{k+1}+\|p\|_k). \\
   \end{split}
 \end{equation*} 
Estimate of  $I_6$:  Using the Cauchy-Schwarz inequality, the trace inequality \eqref{tracein}, the estimate \eqref{error2} with $m=1$, and the error bound  \eqref{trinorm}, we get
\begin{equation*}
\begin{split}
|s(Q_h\bw, \bu_h)|=& |\sum_{T\in {\cal T}_h} h_T^{-1}\langle  Q_0\bw-Q_b\bw, \bu_0-\bu_b\rangle_{\partial T}|
\\ \leq & \Big(\sum_{T\in {\cal T}_h} h_T^{-1}\|  Q_0\bw-Q_b\bw\|_{\partial T}^2\Big)^{\frac{1}{2}}\Big(\sum_{T\in {\cal T}_h} h_T^{-1}\|\bu_0-\bu_b\|_{\partial T}^2\Big)^{\frac{1}{2}}\\
\leq &  \Big(\sum_{T\in {\cal T}_h} h_T^{-2}\| Q_0\bw- \bw\|_{T}^2+\| Q_0\bw- \bw\|_{1, T}^2\Big)^{\frac{1}{2}} \3bar \bu_h\3bar\\
\leq &    Ch\|\bw\|_2 h^k(\|\bu\|_{k+1}+\|p\|_k).
 \end{split}
 \end{equation*} 
 
Substituting all bounds of $I_i$ ($i=1, \cdots, 6$) into \eqref{e2} and  invoking the regularity condition \eqref{regu2}, we arrive at:
\begin{equation*}
\|e_{\bu_0}\|\leq Ch^{k+1}(\|\bu\|_{k+1}+\|p\|_k).
\end{equation*}

This completes the proof. 
\end{proof}

\section{Numerical experiments}
In the 2D test,  we solve the Brinkman problem \eqref{weak} on the unit square domain $\Omega=
  (0,1)\times (0,1)$, where $\kappa=1$.
The exact solution is chosen as
\an{\label{s-2d} \b u=\p{ -8 (x^2-2 x^3 + x^4 ) ( y -3 y^2 + 2y^3 )\\
   \ \, 8 ( y -3 x^2 + 2 x^3 ) (x^2-2 x^3 + x^4 ) }, \quad  p=(x-\frac 12)^3.  
}

In the first computation, we compute the weak Galerkin finite element solutions by
  the algorithm \eqref{WG}, on triangular meshes shown in Figure \ref{f-2-1}.
We use a stabilized method where we have $r=k-1$  in \eqref{wg-k} in computing the weak
  gradient. 
The results are listed in Table \ref{t1} where we have the
  optimal order of convergence for all variables and in all norms.
  
\begin{figure}[H]
 \begin{center}\setlength\unitlength{1.0pt}
\begin{picture}(380,120)(0,0)
  \put(15,108){$G_1$:} \put(125,108){$G_2$:} \put(235,108){$G_3$:} 
  \put(0,-420){\includegraphics[width=380pt]{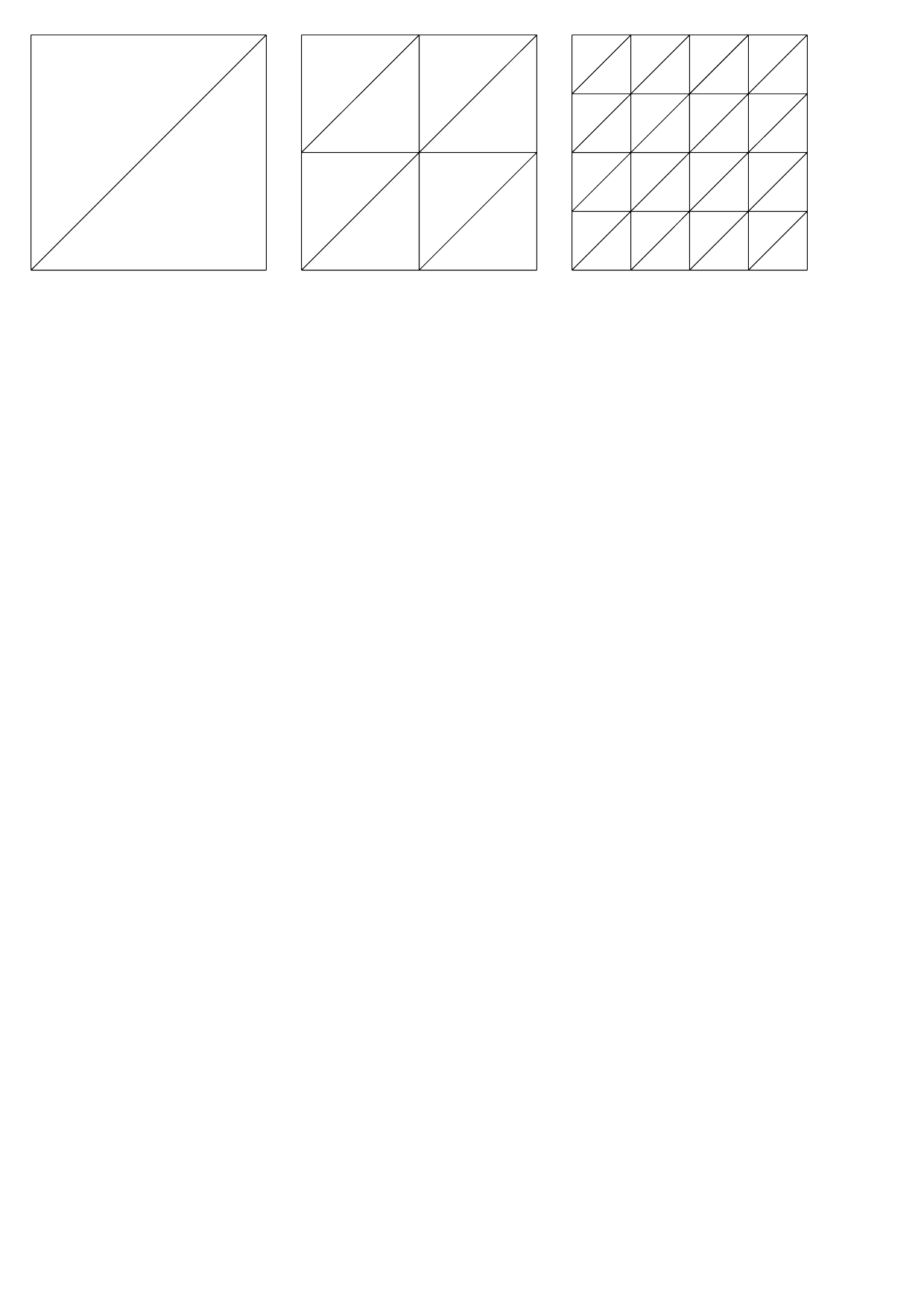}}  
 \end{picture}\end{center}
\caption{The triangular meshes for the computation in Table \ref{t1}. }\label{f-2-1}
\end{figure}

  \begin{table}[H]
  \caption{ Error profile for computing \eqref{s-2d} on meshes shown in Figure \ref{f-2-1}.} \label{t1}
\begin{center}  
   \begin{tabular}{c|rr|rr|rr}  
 \hline 
$G_i$ &  $ \|Q_h\b u - \b u_h \| $ & $O(h^r)$ &  $ \3bar Q_h \b u-\bw\3bar $ & $O(h^r)$ 
   &  $ \| p -p_h \| $ & $O(h^r)$   \\ \hline 
&\multicolumn{6}{c}{By the $P_1$-$P_1$/$P_0$ weak Galerkin finite element \eqref{V-h} and \eqref{W-h} }\\
 \hline 
 4&     0.569E-3 &  2.5&     0.119E-1 &  1.1&     0.376E-2 &  0.9 \\
 5&     0.127E-3 &  2.2&     0.580E-2 &  1.0&     0.187E-2 &  1.0 \\
 6&     0.308E-4 &  2.0&     0.288E-2 &  1.0&     0.912E-3 &  1.0 \\
 \hline 
&\multicolumn{6}{c}{By the $P_2$-$P_2$/$P_1$ weak Galerkin finite element \eqref{V-h} and \eqref{W-h} }\\
 \hline  
 4&     0.300E-4 &  3.7&     0.107E-2 &  2.2&     0.356E-3 &  2.1 \\
 5&     0.267E-5 &  3.5&     0.255E-3 &  2.1&     0.870E-4 &  2.0 \\
 6&     0.290E-6 &  3.2&     0.632E-4 &  2.0&     0.213E-4 &  2.0 \\
 \hline 
&\multicolumn{6}{c}{By the $P_3$-$P_3$/$P_2$ weak Galerkin finite element \eqref{V-h} and \eqref{W-h} }\\
 \hline  
 4&     0.209E-5 &  4.8&     0.630E-4 &  3.3&     0.269E-4 &  2.9 \\
 5&     0.807E-7 &  4.7&     0.742E-5 &  3.1&     0.334E-5 &  3.0 \\
 6&     0.387E-8 &  4.4&     0.941E-6 &  3.0&     0.401E-6 &  3.1 \\
 \hline 
&\multicolumn{6}{c}{By the $P_4$-$P_4$/$P_3$ weak Galerkin finite element \eqref{V-h} and \eqref{W-h} }\\
 \hline  
 3&     0.560E-5 &  5.8&     0.608E-4 &  4.4&     0.286E-4 &  3.8 \\
 4&     0.972E-7 &  5.8&     0.345E-5 &  4.1&     0.177E-5 &  4.0 \\
 5&     0.204E-8 &  5.6&     0.273E-6 &  3.7&     0.104E-6 &  4.1 \\
\hline 
\end{tabular} \end{center}  \end{table}

We compute again the weak Galerkin finite element solutions by
  the algorithm \eqref{WG}, but on non-convex heptagon meshes shown in Figure \ref{f-2-3}.
We use the stabilizer-free method where we take $r=k+3$  in \eqref{wg-k} in computing the weak
  gradient. Again we take $r=k-1$  in \eqref{wd-k} in computing the weak
  divergence.
The results are listed in Table \ref{t2} where we have the
  optimal order of convergence for all variables and in all norms.
  
\begin{figure}[H]
 \begin{center}\setlength\unitlength{1.0pt}
\begin{picture}(380,120)(0,0)
  \put(15,108){$G_1$:} \put(125,108){$G_2$:} \put(235,108){$G_3$:} 
  \put(0,-420){\includegraphics[width=380pt]{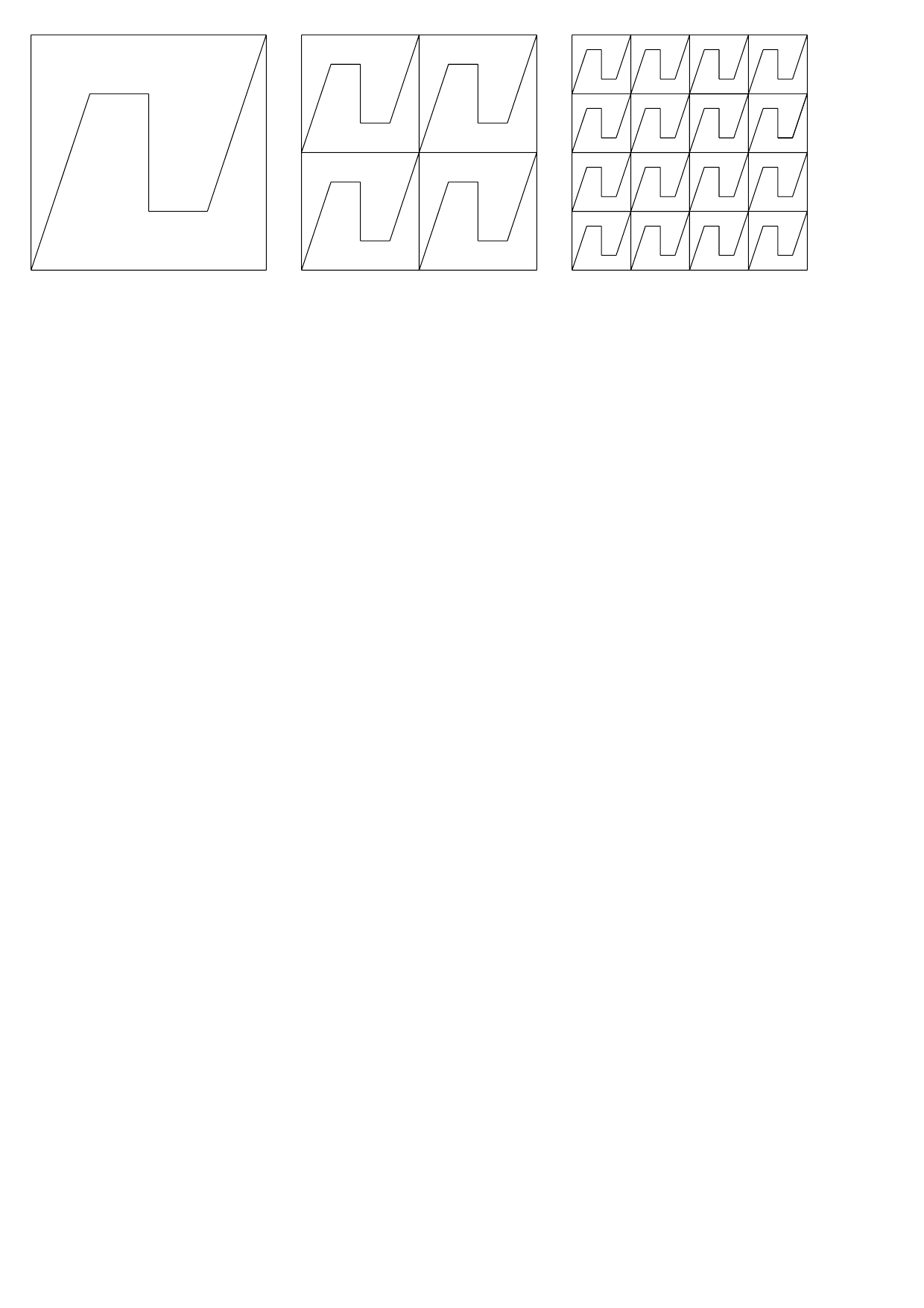}}  
 \end{picture}\end{center}
\caption{The non-convex heptagon meshes for the computation in Table \ref{t2}. }\label{f-2-3}
\end{figure}

  \begin{table}[H]
  \caption{ Error profile for computing \eqref{s-2d} on meshes shown in Figure \ref{f-2-2}.} \label{t2}
\begin{center}  
   \begin{tabular}{c|rr|rr|rr}  
 \hline 
$G_i$ &  $ \|Q_h\b u - \b u_h \| $ & $O(h^r)$ &  $ \3bar Q_h \b u-\bw\3bar $ & $O(h^r)$ 
   &  $ \| p -p_h \| $ & $O(h^r)$   \\ \hline 
&\multicolumn{6}{c}{By the $P_1$-$P_1$/$P_0$ weak Galerkin finite element \eqref{V-h} and \eqref{W-h} }\\
 \hline 
 5&     0.174E-3 &  2.1&     0.707E-2 &  1.4&     0.165E-2 &  1.1 \\
 6&     0.436E-4 &  2.0&     0.337E-2 &  1.1&     0.792E-3 &  1.1 \\
 7&     0.109E-4 &  2.0&     0.167E-2 &  1.0&     0.387E-3 &  1.0 \\
 \hline 
&\multicolumn{6}{c}{By the $P_2$-$P_2$/$P_1$ weak Galerkin finite element \eqref{V-h} and \eqref{W-h} }\\
 \hline  
 3&     0.939E-3 &  4.7&     0.401E-1 &  3.8&     0.155E-2 &  3.2 \\
 4&     0.549E-4 &  4.1&     0.297E-2 &  3.8&     0.339E-3 &  2.2 \\
 5&     0.516E-5 &  3.4&     0.373E-3 &  3.0&     0.717E-4 &  2.2 \\
 \hline 
&\multicolumn{6}{c}{By the $P_3$-$P_3$/$P_2$ weak Galerkin finite element \eqref{V-h} and \eqref{W-h} }\\
 \hline  
 1&     0.592E+0 &  0.0&     0.902E+1 &  0.0&     0.399E-1 &  0.0 \\
 2&     0.974E-2 &  5.9&     0.363E+0 &  4.6&     0.360E-2 &  3.5 \\
 3&     0.172E-3 &  5.8&     0.115E-1 &  5.0&     0.268E-3 &  3.7 \\
\hline 
\end{tabular} \end{center}  \end{table}

In the next 2D computation,  we compute the weak Galerkin finite element solutions 
    on non-convex polygon meshes shown in Figure \ref{f-2-6}. 
We get the  optimal order of convergence for all variables and in all norms in Table \ref{t3}.

\begin{figure}[H]
 \begin{center}\setlength\unitlength{1.0pt}
\begin{picture}(380,120)(0,0)
  \put(15,108){$G_1$:} \put(125,108){$G_2$:} \put(235,108){$G_3$:} 
  \put(0,-420){\includegraphics[width=380pt]{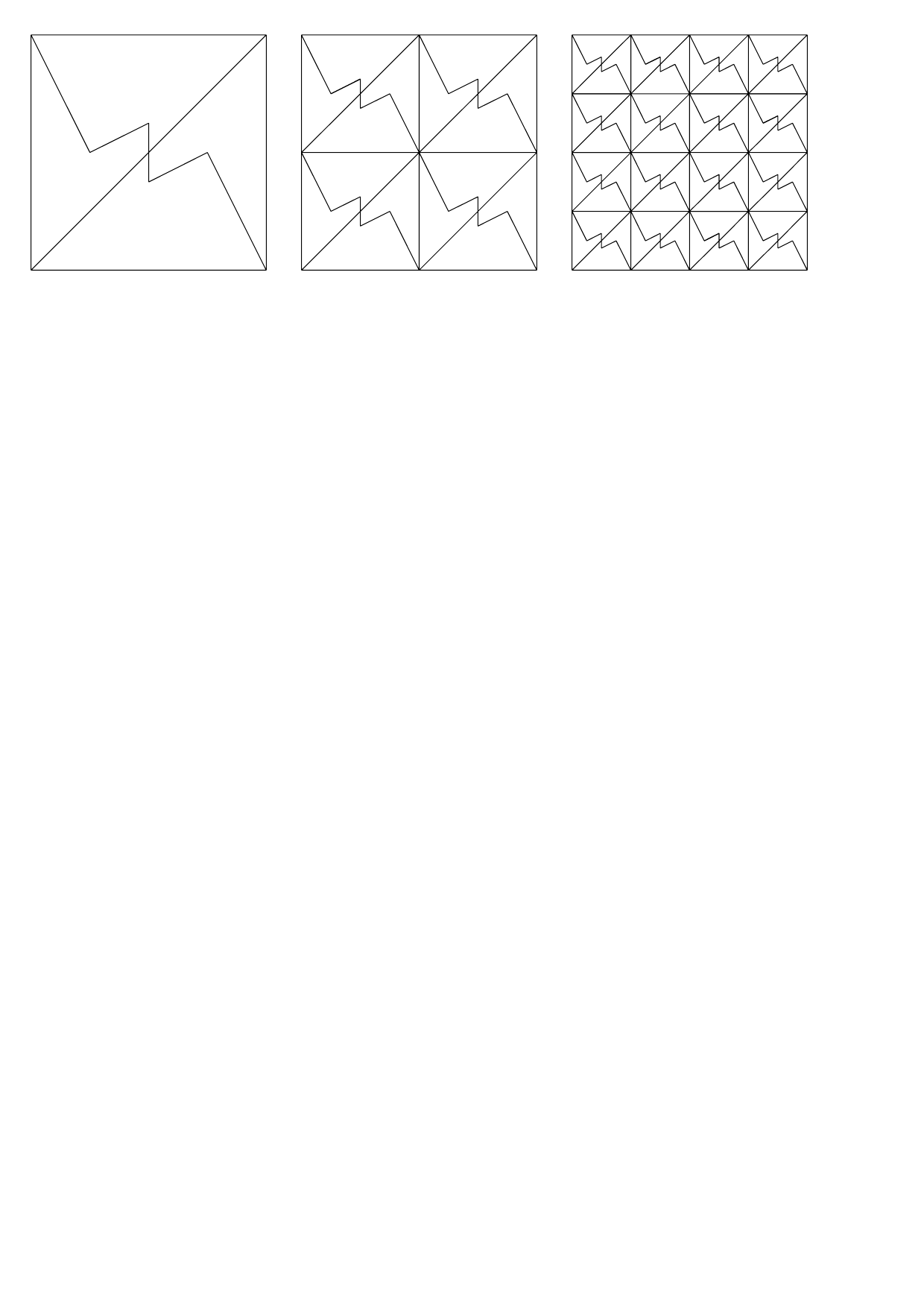}}  
 \end{picture}\end{center}
\caption{The non-convex polygon meshes for the computation in Table \ref{t3}. }\label{f-2-6}
\end{figure}

  \begin{table}[H]
  \caption{ Error profile for computing \eqref{s-2d} on meshes shown in Figure \ref{f-2-6}.} \label{t3}
\begin{center}  
   \begin{tabular}{c|rr|rr|rr}  
 \hline 
$G_i$ &  $ \|Q_h\b u - \b u_h \| $ & $O(h^r)$ &  $ \3bar Q_h \b u-\bw\3bar $ & $O(h^r)$ 
   &  $ \| p -p_h \| $ & $O(h^r)$   \\ \hline 
&\multicolumn{6}{c}{By the $P_1$-$P_1$/$P_0$ weak Galerkin finite element \eqref{V-h} and \eqref{W-h} }\\
 \hline 
 5&     0.663E-4 &  2.2&     0.423E-2 &  1.1&     0.104E-2 &  1.1 \\
 6&     0.159E-4 &  2.1&     0.210E-2 &  1.0&     0.494E-3 &  1.1 \\
 7&     0.392E-5 &  2.0&     0.105E-2 &  1.0&     0.241E-3 &  1.0 \\
 \hline 
&\multicolumn{6}{c}{By the $P_2$-$P_2$/$P_1$ weak Galerkin finite element \eqref{V-h} and \eqref{W-h} }\\
 \hline  
 4&     0.137E-4 &  3.7&     0.568E-3 &  2.3&     0.130E-3 &  2.2 \\
 5&     0.130E-5 &  3.4&     0.137E-3 &  2.1&     0.311E-4 &  2.1 \\
 6&     0.147E-6 &  3.1&     0.340E-4 &  2.0&     0.764E-5 &  2.0 \\
 \hline 
&\multicolumn{6}{c}{By the $P_3$-$P_3$/$P_2$ weak Galerkin finite element \eqref{V-h} and \eqref{W-h} }\\
 \hline  
 4&     0.137E-4 &  3.7&     0.568E-3 &  2.3&     0.130E-3 &  2.2 \\
 5&     0.130E-5 &  3.4&     0.137E-3 &  2.1&     0.311E-4 &  2.1 \\
 6&     0.147E-6 &  3.1&     0.340E-4 &  2.0&     0.764E-5 &  2.0 \\
 \hline 
&\multicolumn{6}{c}{By the $P_4$-$P_4$/$P_3$ weak Galerkin finite element \eqref{V-h} and \eqref{W-h} }\\
 \hline  
 3&     0.157E-4 &  5.0&     0.248E-3 &  4.5&     0.503E-4 &  3.7 \\
 4&     0.573E-6 &  4.8&     0.222E-4 &  3.5&     0.542E-5 &  3.2 \\
 5&     0.276E-7 &  4.4&     0.412E-5 &  2.4&     0.652E-6 &  3.1 \\
\hline 
\end{tabular} \end{center}  \end{table}

In the next 2D computation,  we compute the weak Galerkin finite element solutions 
    on non-convex polygon meshes shown in Figure \ref{f-2-7}. 
We get the  optimal order of convergence for all variables and in all norms in Table \ref{t4}.

\begin{figure}[H]
 \begin{center}\setlength\unitlength{1.0pt}
\begin{picture}(380,120)(0,0)
  \put(15,108){$G_1$:} \put(125,108){$G_2$:} \put(235,108){$G_3$:} 
  \put(0,-420){\includegraphics[width=380pt]{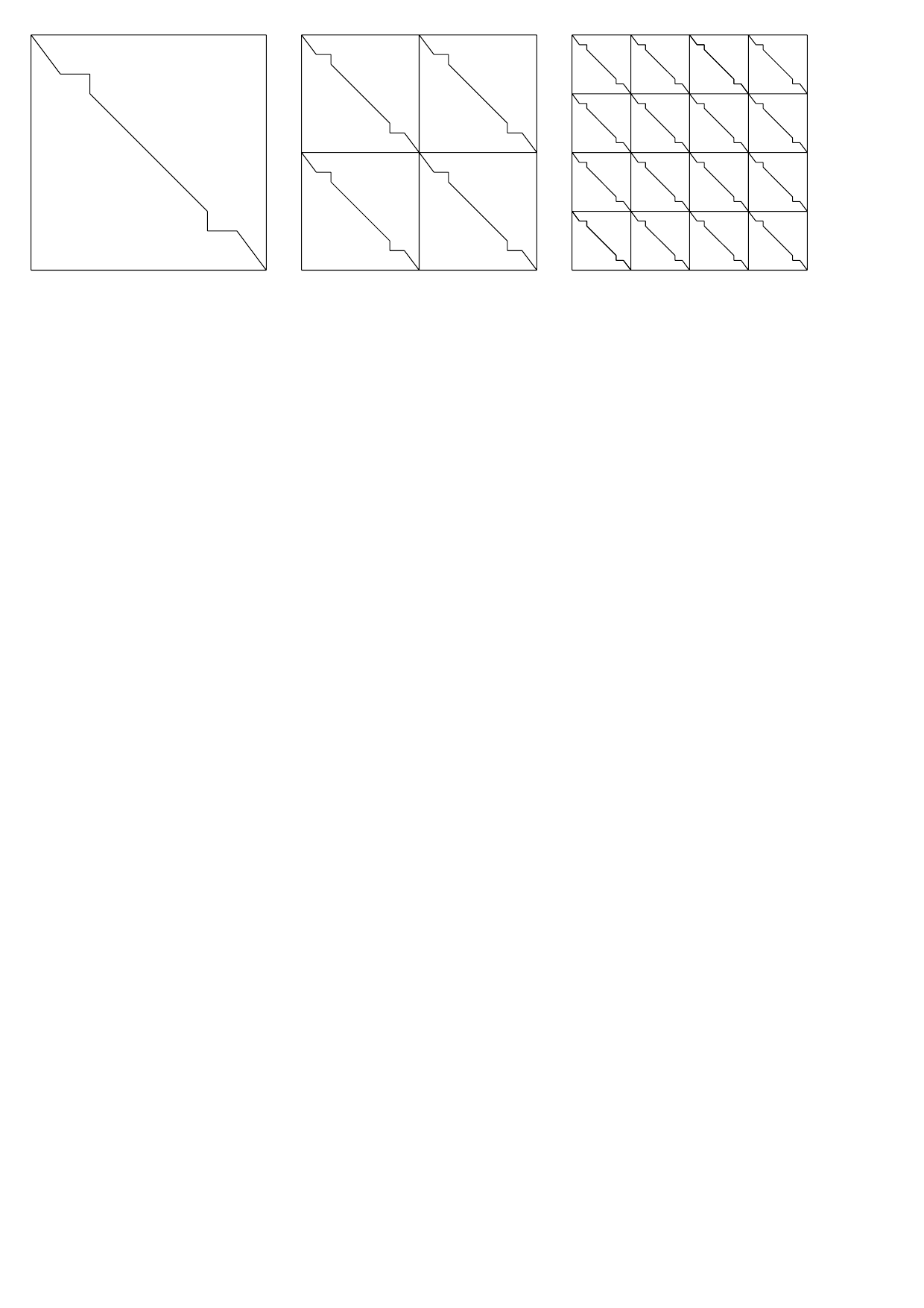}}  
 \end{picture}\end{center}
\caption{The non-convex polygon meshes for the computation in Table \ref{t4}. }\label{f-2-7}
\end{figure}

  \begin{table}[H]
  \caption{ Error profile for computing \eqref{s-2d} on meshes shown in Figure \ref{f-2-7}.} \label{t4}
\begin{center}  
   \begin{tabular}{c|rr|rr|rr}  
 \hline 
$G_i$ &  $ \|Q_h\b u - \b u_h \| $ & $O(h^r)$ &  $ \3bar Q_h \b u-\bw\3bar $ & $O(h^r)$ 
   &  $ \| p -p_h \| $ & $O(h^r)$   \\ \hline 
&\multicolumn{6}{c}{By the $P_1$-$P_1$/$P_0$ weak Galerkin finite element \eqref{V-h} and \eqref{W-h} }\\
 \hline 
 5&     0.130E-3 &  2.2&     0.592E-2 &  1.0&     0.171E-2 &  1.0 \\
 6&     0.315E-4 &  2.0&     0.294E-2 &  1.0&     0.833E-3 &  1.0 \\
 7&     0.784E-5 &  2.0&     0.147E-2 &  1.0&     0.408E-3 &  1.0 \\
 \hline 
&\multicolumn{6}{c}{By the $P_2$-$P_2$/$P_1$ weak Galerkin finite element \eqref{V-h} and \eqref{W-h} }\\
 \hline  
 5&     0.291E-5 &  3.4&     0.265E-3 &  2.1&     0.844E-4 &  2.0 \\
 6&     0.323E-6 &  3.2&     0.654E-4 &  2.0&     0.208E-4 &  2.0 \\
 7&     0.391E-7 &  3.0&     0.163E-4 &  2.0&     0.515E-5 &  2.0 \\
 \hline 
&\multicolumn{6}{c}{By the $P_3$-$P_3$/$P_2$ weak Galerkin finite element \eqref{V-h} and \eqref{W-h} }\\
 \hline  
 4&     0.217E-5 &  4.8&     0.749E-4 &  3.2&     0.293E-4 &  2.9 \\
 5&     0.918E-7 &  4.6&     0.894E-5 &  3.1&     0.364E-5 &  3.0 \\
 6&     0.496E-8 &  4.2&     0.129E-5 &  2.8&     0.441E-6 &  3.0 \\
 \hline 
&\multicolumn{6}{c}{By the $P_4$-$P_4$/$P_3$ weak Galerkin finite element \eqref{V-h} and \eqref{W-h} }\\
 \hline  
 3&     0.562E-5 &  5.8&     0.791E-4 &  4.2&     0.293E-4 &  3.9 \\
 4&     0.101E-6 &  5.8&     0.477E-5 &  4.1&     0.182E-5 &  4.0 \\
 5&     0.399E-8 &  4.7&     0.131E-5 &  1.9&     0.108E-6 &  4.1 \\
\hline 
\end{tabular} \end{center}  \end{table}

In the 3D test,  we solve the Brinkman problem \eqref{weak} on the unit cube domain $\Omega=
  (0,1)\times (0,1)\times (0,1)$, where $\kappa=1$.
The exact solution is chosen as
\an{\label{s-3d}\ad{
   \b u &=\p{ -2^{10} x^2 (1-x)^2 y^2 (1-y)^2 ( z -3 z^2 + 2z^3 )\qquad \\ \\
           \ \,2^{10} x^2 (1-x)^2 y^2 (1-y)^2 ( z -3 z^2 + 2z^3 ) \qquad \\ \\
   \ \,2^{10} \Big(( x -3 x^2 + 2x^3 )(y-y^2)^2 \qquad\qquad\qquad\\
       - (x-x^2)^2( y -3 x^2 + 2 x^3 )\Big) (z-z^2)^2 }, \\ 
    p&=10(3y^2-2y^3-y).   }
}

We first compute the weak Galerkin finite element solutions for the 3D problem \eqref{s-3d} by
  the algorithm \eqref{WG}, on tetrahedral meshes shown in Figure \ref{f-3-1}. 
The results are listed in Table \ref{t5} where we have the
  optimal order of convergence for all variables and in all norms.
  
\begin{figure}[H]
 \begin{center}\setlength\unitlength{1.0pt}
\begin{picture}(380,120)(0,0)
  \put(35,108){$G_1$:} \put(145,108){$G_2$:} \put(255,108){$G_3$:} 
  \put(0,-420){\includegraphics[width=380pt]{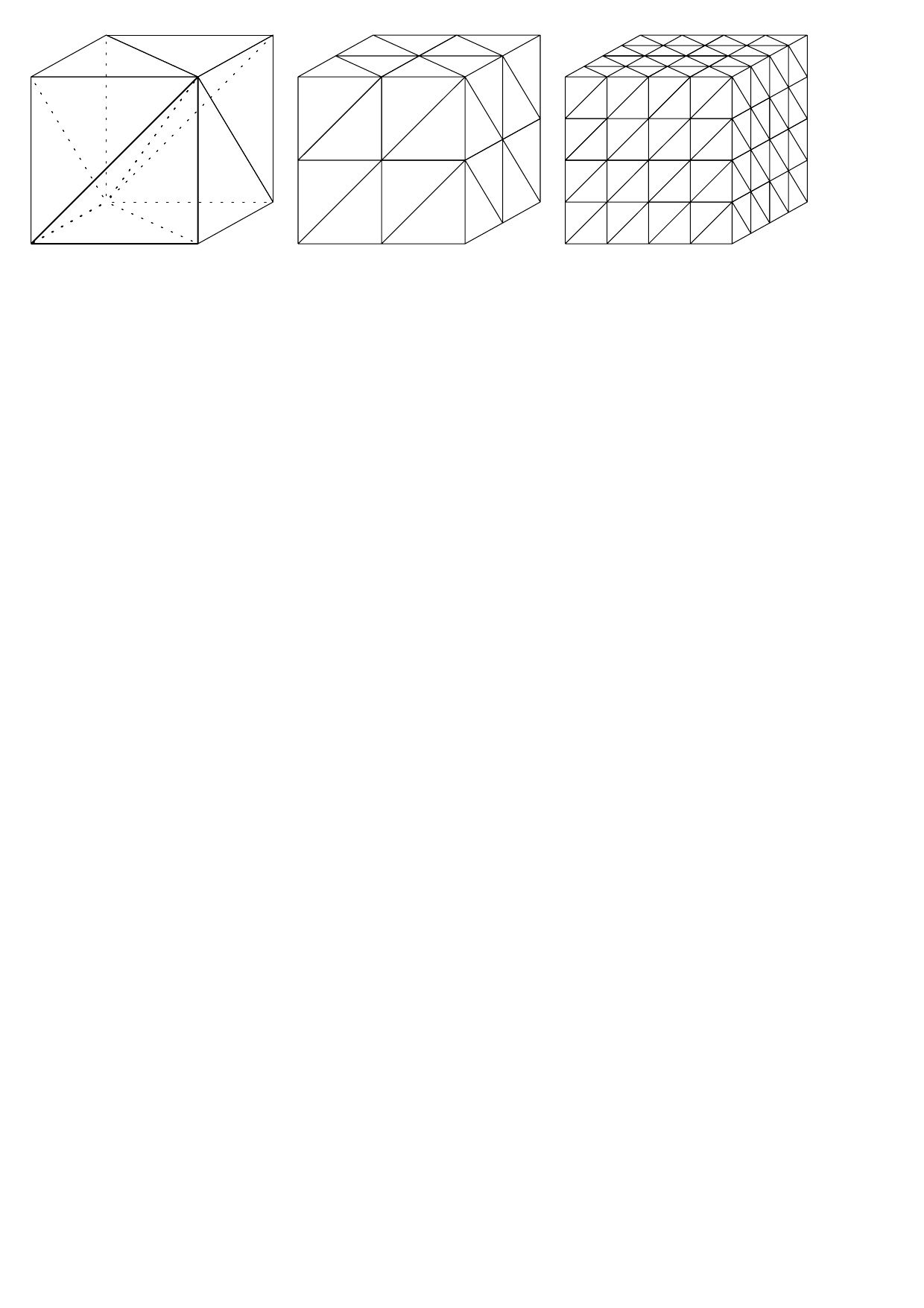}}  
 \end{picture}\end{center}
\caption{The triangular meshes for the computation in Table \ref{t5}. }\label{f-3-1}
\end{figure}

  \begin{table}[H]
  \caption{ Error profile for computing \eqref{s-3d} on meshes shown in Figure \ref{f-3-1}.} \label{t5}
\begin{center}  
   \begin{tabular}{c|rr|rr|rr}  
 \hline 
$G_i$ &  $ \|Q_h\b u - \b u_h \| $ & $O(h^r)$ &  $ \3bar Q_h \b u-\bw\3bar $ & $O(h^r)$ 
   &  $ \| p -p_h \| $ & $O(h^r)$   \\ \hline 
&\multicolumn{6}{c}{By the $P_1$-$P_1$/$P_0$ weak Galerkin finite element \eqref{V-h} and \eqref{W-h} }\\
 \hline 
 2 &    0.118E+0 &1.32 &    0.901E+0 &0.25 &    0.329E+0 &0.00 \\
 3 &    0.233E-1 &2.34 &    0.380E+0 &1.25 &    0.120E+0 &1.45 \\
 4 &    0.553E-2 &2.08 &    0.141E+0 &1.43 &    0.377E-1 &1.68 \\ 
 \hline 
&\multicolumn{6}{c}{By the $P_2$-$P_2$/$P_1$ weak Galerkin finite element \eqref{V-h} and \eqref{W-h} }\\
 \hline  
 1 &    0.214E+0 &0.00 &    0.768E+0 &0.00 &    0.728E+0 &0.00 \\
 2 &    0.332E-1 &2.69 &    0.402E+0 &0.94 &    0.192E+0 &1.92 \\
 3 &    0.493E-2 &2.75 &    0.935E-1 &2.10 &    0.341E-1 &2.50 \\
 \hline 
&\multicolumn{6}{c}{By the $P_3$-$P_3$/$P_2$ weak Galerkin finite element \eqref{V-h} and \eqref{W-h} }\\
 \hline  
 1 &    0.131E+0 &0.00 &    0.833E+0 &0.00 &    0.547E+0 &0.00 \\
 2 &    0.114E-1 &3.52 &    0.157E+0 &2.40 &    0.859E-1 &2.67 \\
 3 &    0.352E-2 &1.70 &    0.317E-1 &2.31 &    0.103E-1 &3.06 \\
\hline 
\end{tabular} \end{center}  \end{table}

We next compute the weak Galerkin finite element solutions for \eqref{s-3d} on a
 family of non-convex polyhedral meshes shown in Figure \ref{f-3-2},
where the six pyramids are combined into three non-convex polyhedra inside each cube.
The results are listed in Table \ref{t6} where we obtain the
  optimal order of convergence.
  
\begin{figure}[H]
 \begin{center}\setlength\unitlength{1.0pt}
\begin{picture}(380,120)(0,0)
  \put(35,108){$G_1$:} \put(145,108){$G_2$:} \put(255,108){$G_3$:} 
  \put(0,-420){\includegraphics[width=380pt]{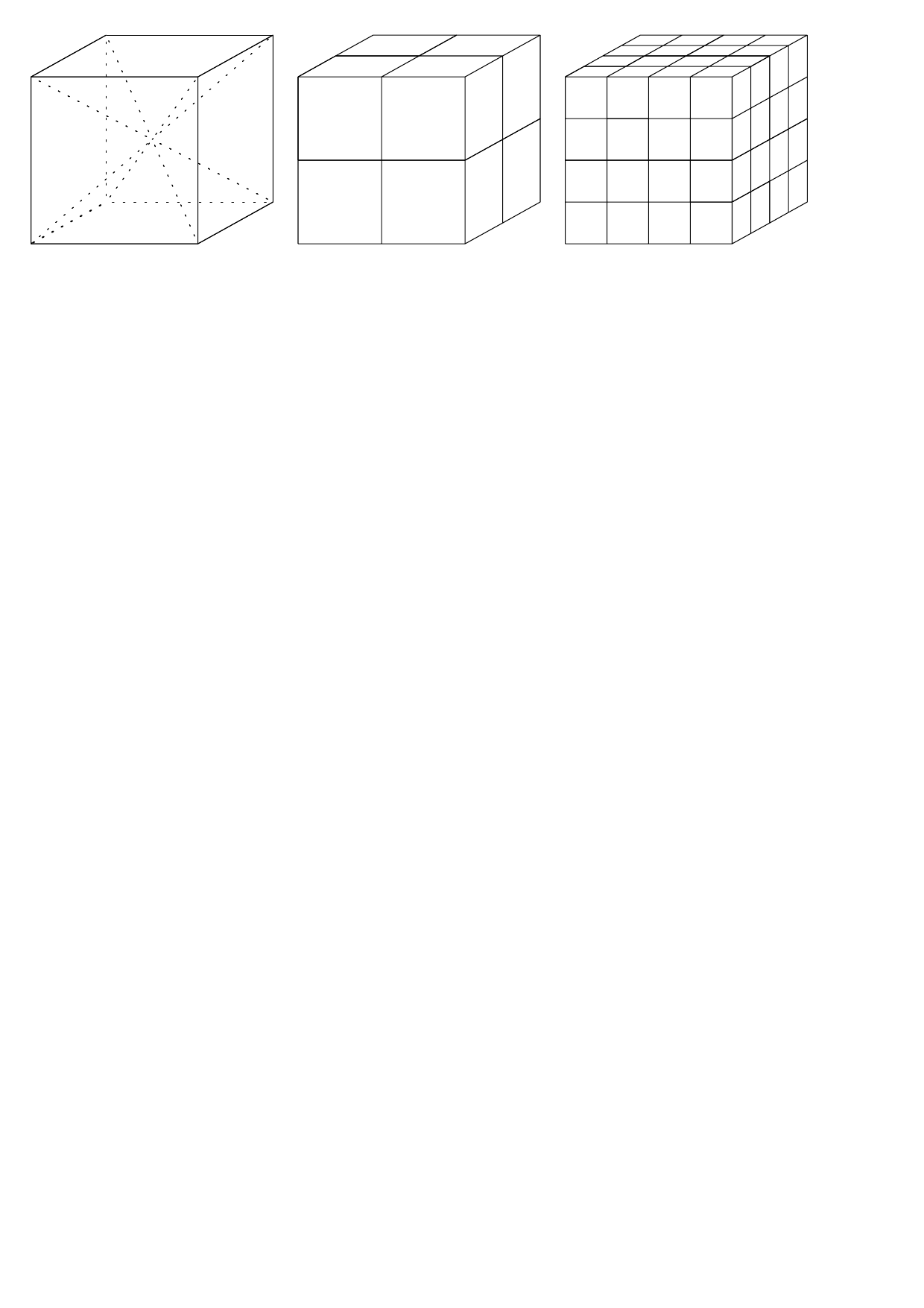}}  
 \end{picture}\end{center}
\caption{The triangular meshes for the computation in Table \ref{t6}. }\label{f-3-2}
\end{figure}

  \begin{table}[H]
  \caption{ Error profile for computing \eqref{s-3d} on meshes shown in Figure \ref{f-3-2}.} \label{t6}
\begin{center}  
   \begin{tabular}{c|rr|rr|rr}  
 \hline 
$G_i$ &  $ \|Q_h\b u - \b u_h \| $ & $O(h^r)$ &  $ \3bar Q_h \b u-\bw\3bar $ & $O(h^r)$ 
   &  $ \| p -p_h \| $ & $O(h^r)$   \\ \hline 
&\multicolumn{6}{c}{By the $P_1$-$P_1$/$P_0$ weak Galerkin finite element \eqref{V-h} and \eqref{W-h} }\\
 \hline 
 2 &    0.849E-1 &1.28 &    0.396E+1 &0.00 &    0.446E+0 &0.00 \\
 3 &    0.384E-1 &1.14 &    0.167E+1 &1.25 &    0.236E+0 &0.92 \\
 4 &    0.994E-2 &1.95 &    0.677E+0 &1.30 &    0.863E-1 &1.45 \\
 \hline 
&\multicolumn{6}{c}{By the $P_2$-$P_2$/$P_1$ weak Galerkin finite element \eqref{V-h} and \eqref{W-h} }\\
 \hline  
 1 &    0.241E+0 &0.00 &    0.341E+1 &0.00 &    0.171E+1 &0.00 \\
 2 &    0.918E-1 &1.39 &    0.370E+1 &0.00 &    0.956E+0 &0.84 \\
 3 &    0.861E-2 &3.41 &    0.816E+0 &2.18 &    0.189E+0 &2.34 \\
 \hline 
&\multicolumn{6}{c}{By the $P_3$-$P_3$/$P_2$ weak Galerkin finite element \eqref{V-h} and \eqref{W-h} }\\
 \hline  
 1 &    0.188E+0 &0.00 &    0.457E+1 &0.00 &    0.258E+1 &0.00 \\
 2 &    0.860E-1 &1.13 &    0.511E+1 &0.00 &    0.154E+1 &0.75 \\
 3 &    0.852E-2 &3.34 &    0.376E+0 &3.77 &    0.182E+0 &3.08 \\
\hline 
\end{tabular} \end{center}  \end{table}

  We compute the weak Galerkin finite element solutions for \eqref{s-3d} 
   on non-convex polyhedral meshes shown in Figure \ref{f-3-4}, in Table \ref{t7}.
We use the stabilizer-free method where we take $r=k+3$  in \eqref{wg-k} in computing the weak
  gradient.  
  
\begin{figure}[H]
 \begin{center}\setlength\unitlength{1.0pt}
\begin{picture}(380,120)(0,0)
  \put(35,108){$G_1$:} \put(145,108){$G_2$:} \put(255,108){$G_3$:} 
  \put(0,-420){\includegraphics[width=380pt]{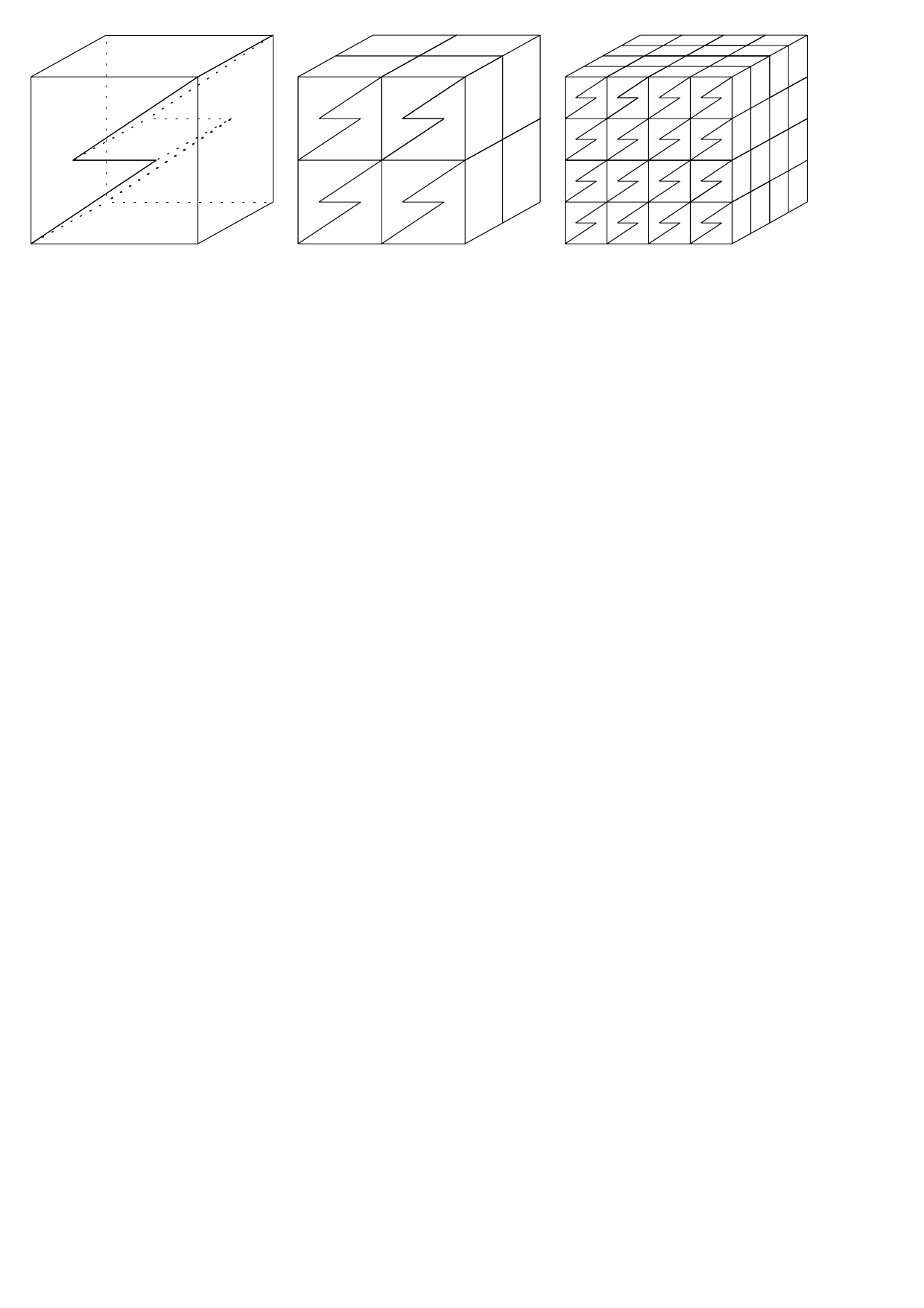}}  
 \end{picture}\end{center}
\caption{The triangular meshes for the computation in Table \ref{t7}. }\label{f-3-4}
\end{figure}

  \begin{table}[H]
  \caption{ Error profile for computing \eqref{s-3d} on meshes shown in Figure \ref{f-3-4}.} \label{t7}
\begin{center}  
   \begin{tabular}{c|rr|rr|rr}  
 \hline 
$G_i$ &  $ \|Q_h\b u - \b u_h \| $ & $O(h^r)$ &  $ \3bar Q_h \b u-\bw\3bar $ & $O(h^r)$ 
   &  $ \| p -p_h \| $ & $O(h^r)$   \\ \hline 
&\multicolumn{6}{c}{By the $P_1$-$P_1$/$P_0$ weak Galerkin finite element \eqref{V-h} and \eqref{W-h} }\\
 \hline 
 2 &    0.439E+0 &1.94 &    0.285E+1 &0.04 &    0.567E+0 &0.06 \\
 3 &    0.468E-1 &3.23 &    0.935E+0 &1.61 &    0.241E+0 &1.24 \\
 4 &    0.759E-2 &2.62 &    0.284E+0 &1.72 &    0.656E-1 &1.88 \\
 \hline 
&\multicolumn{6}{c}{By the $P_2$-$P_2$/$P_1$ weak Galerkin finite element \eqref{V-h} and \eqref{W-h} }\\
 \hline  
 2 &    0.215E+0 &4.18 &    0.396E+1 &1.58 &    0.168E+1 &1.79 \\
 3 &    0.159E-1 &3.76 &    0.746E+0 &2.41 &    0.252E+0 &2.74 \\
 4 &    0.365E-2 &2.12 &    0.104E+0 &2.85 &    0.339E-1 &2.90 \\ 
 \hline 
&\multicolumn{6}{c}{By the $P_3$-$P_3$/$P_2$ weak Galerkin finite element \eqref{V-h} and \eqref{W-h} }\\
 \hline  
 1 &    0.404E+1 &0.00 &    0.170E+2 &0.00 &    0.939E+1 &0.00 \\
 2 &    0.176E+0 &4.52 &    0.547E+1 &1.63 &    0.212E+1 &2.15 \\
 3 &    0.769E-2 &4.52 &    0.441E+0 &3.63 &    0.173E+0 &3.61 \\
\hline 
\end{tabular} \end{center}  \end{table}

As the last test, compute the weak Galerkin finite element solutions for \eqref{s-3d} 
   on non-convex polyhedral meshes shown in Figure \ref{f-3-7}, in Table \ref{t9}. 
 We get the optimal order convergence in all cases.
 
\begin{figure}[H]
 \begin{center}\setlength\unitlength{1.0pt}
\begin{picture}(380,120)(0,0)
  \put(35,108){$G_1$:} \put(145,108){$G_2$:} \put(255,108){$G_3$:} 
  \put(0,-420){\includegraphics[width=380pt]{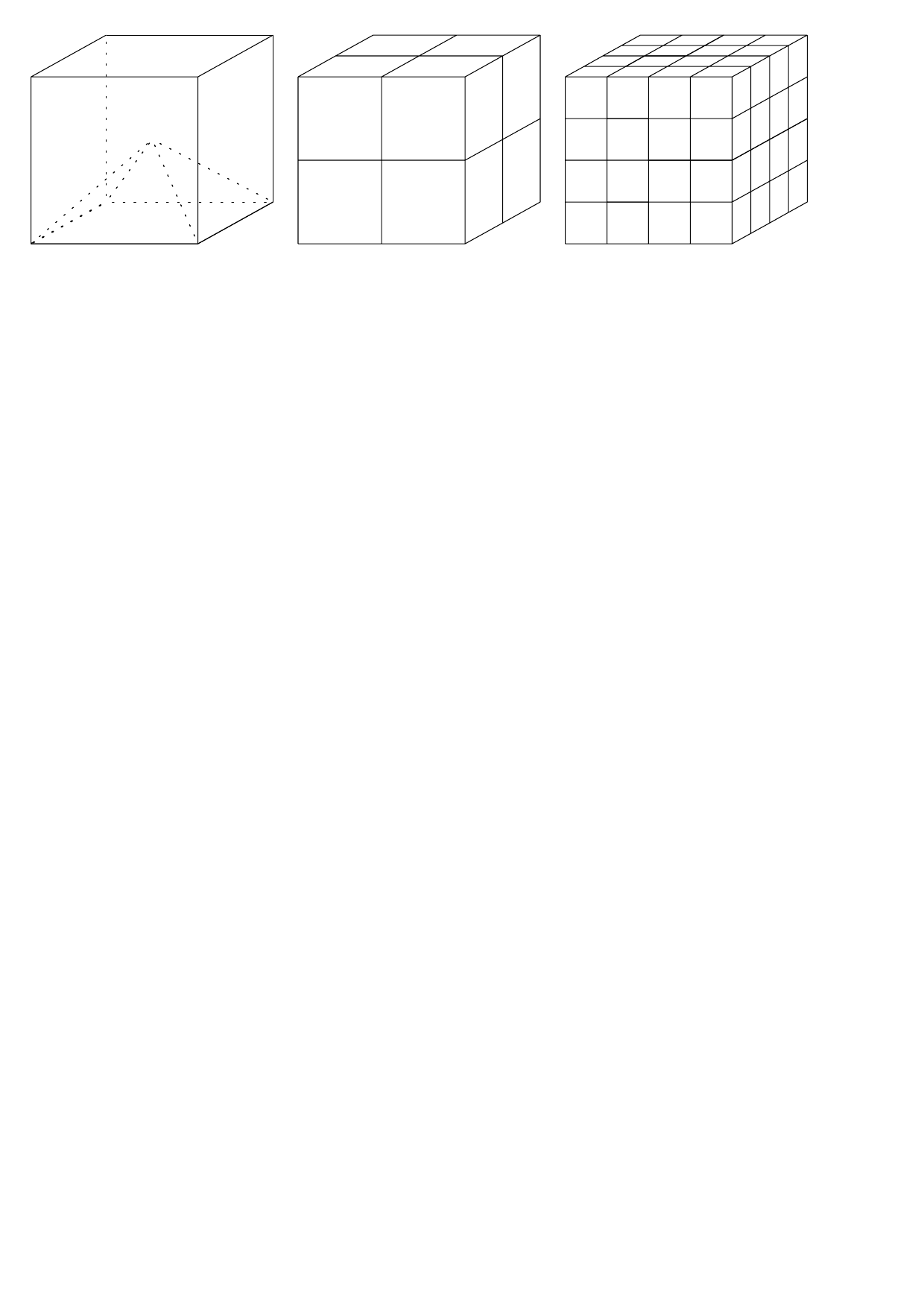}}  
 \end{picture}\end{center}
\caption{The triangular meshes for the computation in Table \ref{t9}. }\label{f-3-7}
\end{figure}

  \begin{table}[H]
  \caption{ Error profile for computing \eqref{s-3d} on meshes shown in Figure \ref{f-3-7}.} \label{t9}
\begin{center}  
   \begin{tabular}{c|rr|rr|rr}  
 \hline 
$G_i$ &  $ \|Q_h\b u - \b u_h \| $ & $O(h^r)$ &  $ \3bar Q_h \b u-\bw\3bar $ & $O(h^r)$ 
   &  $ \| p -p_h \| $ & $O(h^r)$   \\ \hline 
&\multicolumn{6}{c}{By the $P_1$-$P_1$/$P_0$ weak Galerkin finite element \eqref{V-h} and \eqref{W-h} }\\
 \hline 
 2 &    0.588E+0 &1.55 &    0.356E+1 &0.00 &    0.135E+1 &0.00 \\
 3 &    0.682E-1 &3.11 &    0.140E+1 &1.35 &    0.485E+0 &1.48 \\
 4 &    0.106E-1 &2.69 &    0.443E+0 &1.66 &    0.123E+0 &1.98 \\
 \hline 
&\multicolumn{6}{c}{By the $P_2$-$P_2$/$P_1$ weak Galerkin finite element \eqref{V-h} and \eqref{W-h} }\\
 \hline  
 2 &    0.351E+0 &4.38 &    0.431E+1 &2.05 &    0.263E+1 &2.68 \\
 3 &    0.252E-1 &3.80 &    0.125E+1 &1.79 &    0.491E+0 &2.42 \\
 4 &    0.373E-2 &2.76 &    0.182E+0 &2.78 &    0.696E-1 &2.82 \\
 \hline 
&\multicolumn{6}{c}{By the $P_3$-$P_3$/$P_2$ weak Galerkin finite element \eqref{V-h} and \eqref{W-h} }\\
 \hline  
 1 &    0.658E+1 &0.00 &    0.317E+2 &0.00 &    0.437E+2 &0.00 \\
 2 &    0.267E+0 &4.62 &    0.899E+1 &1.82 &    0.445E+1 &3.29 \\
 3 &    0.169E-1 &3.99 &    0.855E+0 &3.39 &    0.428E+0 &3.38 \\
\hline 
\end{tabular} \end{center}  \end{table}

\end{document}